\newcommand{\myrule} [3] []{
    \begin{center}
        \begin{tikzpicture}
            \draw[#2-#3,  thick, #1] (0,0) to (0.35\linewidth,0);
        \end{tikzpicture}
    \end{center}
}
\theoremstyle{plain}
\newtheorem{theorem}{Theorem} 
\newtheorem{corollary}[theorem]{Corollary}
\newtheorem{conj}[theorem]{Conjecture}
\newtheorem{proposition}[theorem]{Proposition}
\newtheorem{lemma}[theorem]{Lemma}
\newtheorem{question}[theorem]{Question}
\newtheorem*{claim*}{Claim}
\theoremstyle{definition}
\newtheorem{remark}[theorem]{Remark}
\newcommand{\Z}{\ensuremath{{\mathbb{Z}}}}
\newcommand{\Q}{\ensuremath{{\mathbb{Q}}}}
\newcommand{\X}{\ensuremath{{\widetilde X}}}
\renewcommand{\P}{\ensuremath{{\mathcal P}}}
\newcommand{\HF}{\ensuremath{{\widehat{\rm HF}}}}
\newcommand{\HFK}{\ensuremath{{\widehat{\rm HFK}}}}
\newcommand{\bdry}{\ensuremath{\partial}}
\DeclareMathOperator{\Rk}{rk}
\DeclareMathOperator{\vol}{vol}
\newcommand{\nbhd}{\ensuremath{\mathcal{N}}}
\title[The Poincar\'e homology sphere, lens space surgeries]{The Poincar\'e homology sphere, lens space surgeries, and some knots with tunnel number two.}
\author[Kenneth L.\ Baker and Neil R.\ Hoffman]{Kenneth L.\ Baker \\ Appendix by Neil R.\ Hoffman}
\address{Department of Mathematics, University of Miami, 
Coral Gables, FL 33146, USA}
\email{k.baker@math.miami.edu}
\address{Department of Mathematics, Oklahoma State University, Stillwater OK,  74078-1058, USA}
\email{neil.r.hoffman@okstate.edu}
\begin{document}

\begin{abstract}
We exhibit an infinite family of knots in the Poincar\'e homology sphere with tunnel number $2$ that have a lens space surgery.  Notably, these knots are not doubly primitive and provide counterexamples to a few conjectures.  
Additionally, we update (and correct) our earlier work on Hedden's almost-simple knots.
In the appendix, it is shown that a hyperbolic knot in the Poincar\'e homology sphere with a lens space surgery has either no symmetries or just a single strong involution.
\end{abstract}

\maketitle

\begin{figure}
\centering
\includegraphics[width=5in]{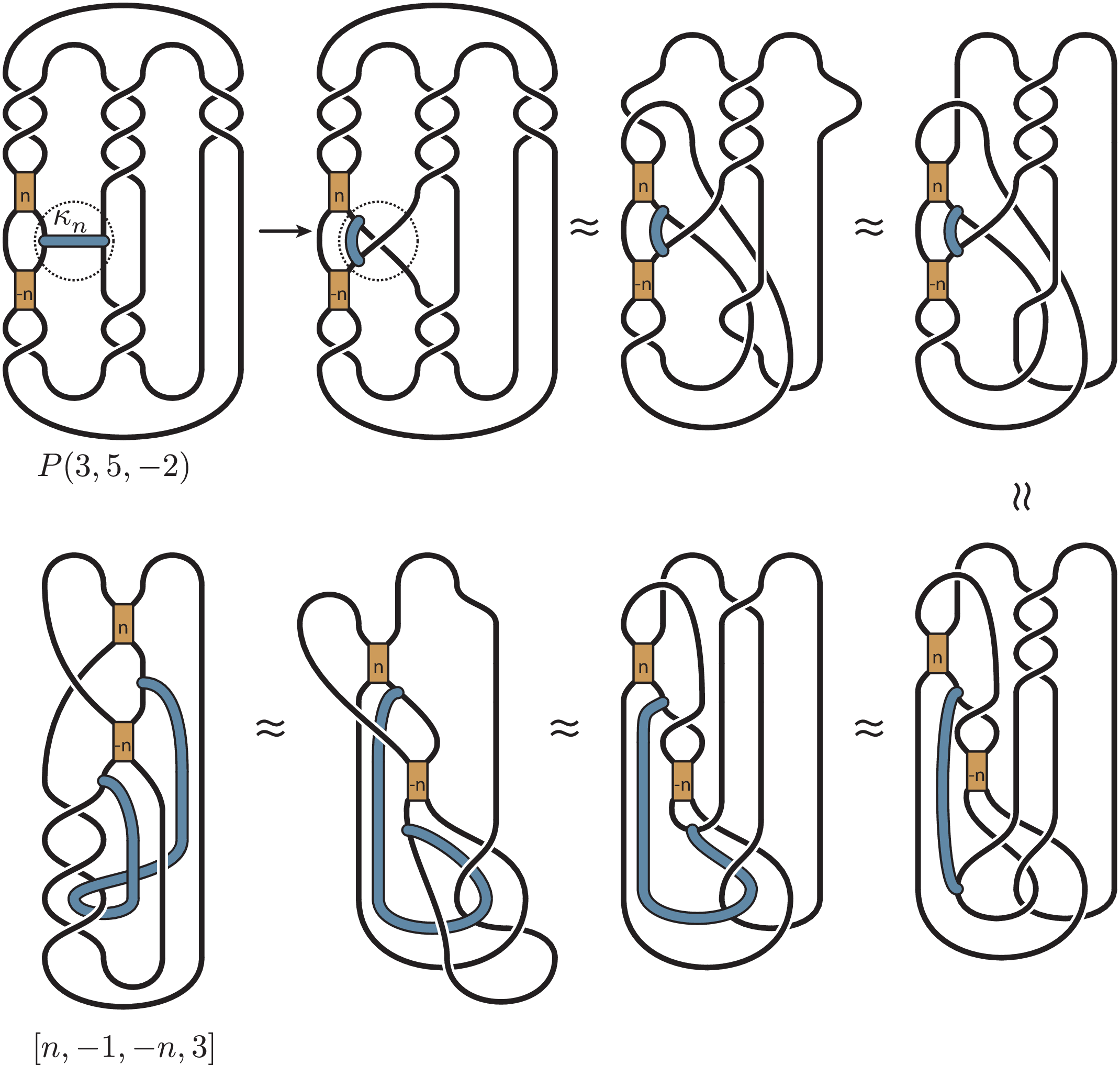}
\caption{Top left shows a family of arcs $\kappa_n$ with end points on the pretzel knot $P(-2,3,5)$.  A numbered vertical box signifies a stack of half-twists where the sign informs the handedness.   We perform a  banding on $\kappa_n$ (framed relative to the horizontal bridge sphere) followed by a sequence of isotopies to recognize the result as the two-bridge knot $B(3 n^2 +  n + 1,  -3n+2)$. The  arc $\kappa_n^*$ dual to the banding is also carried along.}
\label{fig:P-235to2bridge}
\end{figure}

\section{A family of knots in the Poincar\'e homology sphere with lens space surgeries.}

	Figure~\ref{fig:P-235to2bridge} shows a one-parameter family of arcs $\kappa_n$, $n \in \Z$, on the pretzel knot $P(-2,3,5)=P(3,5,-2)$ for which a $-1$--framed banding produces the two bridge knot $B(3 n^2 + n + 1, -3n+2)$. Indeed, $[n,-1,-n,3] = -\tfrac{3 n^2 +  n + 1}{ -3n+2}$.  Through double branched coverings and the Montesinos Trick, the arc $\kappa_n$ lifts to a knot $K_n$ in the Poincar\'e homology sphere on which an integral surgery yields the lens space $L(3n^2+n+1, -3n+2)$.

As presented, the arc $\kappa_n$ lies in a bridge sphere that presents $P(-2,3,5)$ as a $3$--bridge link.  Thus the knot $K_n$ lies in a genus $2$ Heegaard surface for the Poincar\'e homology sphere $\P$.  Nevertheless, as we shall see, generically $K_n$ does not have tunnel number one.

\begin{theorem}\label{thm:main}
For each integer $n$, the knot $K_n$ in the Poincar\'e homology sphere $\P$ has an integral surgery to the lens space $L(3n^2+n+1, -3n+2)$.  If $|n| \geq 4$, then $K_n$ has tunnel number $2$.
\end{theorem}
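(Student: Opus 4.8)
The plan is to separate the two assertions: the existence of the lens space surgery, which is essentially forced by the construction already set up, and the sharp tunnel number computation, whose lower bound carries all the weight.

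For the surgery I would just transcribe what the Montesinos trick delivers. The double branched cover of $S^3$ along $P(-2,3,5)$ is $\P$, and the double branched cover along a two-bridge knot $B(p,q)$ is the lens space $L(p,q)$; the arc $\kappa_n$ lifts to $K_n\subset\P$ and $\kappa_n^*$ to the surgery dual. Since the $-1$--framed banding converts $P(-2,3,5)$ into $B(3n^2+n+1,-3n+2)$ — which is exactly the recorded identity $[n,-1,-n,3]=-\tfrac{3n^2+n+1}{-3n+2}$ — this banding lifts to a Dehn surgery on $K_n$ yielding $L(3n^2+n+1,-3n+2)$. The one genuine point is that the lifted slope is integral, and that follows from how the branch locus meets the banding region; this is bookkeeping rather than mathematics.

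For the tunnel number the upper bound is immediate. The knot $K_n$ lies on the genus $2$ Heegaard surface $\Sigma$ of $\P$ got by lifting the $3$--bridge sphere, and cutting $\Sigma$ along an annular neighborhood of $K_n$ expresses $E(K_n)$ as two genus $2$ handlebodies glued along a twice--punctured torus, i.e. a genus $3$ Heegaard splitting of the exterior; hence $t(K_n)\le 2$. The lower bound $t(K_n)\ge 1$ is also free: tunnel number $0$ would make $E(K_n)$ a solid torus, forcing a genus $1$ Heegaard splitting of $\P$, which does not exist. The entire content of the theorem therefore lies in excluding $t(K_n)=1$.

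Here is the line I would pursue, and the step I expect to be hardest. Suppose $t(K_n)=1$, so $E(K_n)$ has a genus $2$ Heegaard splitting and $K_n$ has an unknotting tunnel. A tunnel number one knot carrying a lens space surgery is severely constrained, and the aim is to show that such a knot must be doubly primitive with respect to a genus $2$ Heegaard surface, the surgery slope being the surface slope. Granting this structural dichotomy, the theorem reduces to proving that $K_n$ is \emph{not} doubly primitive once $|n|\ge 4$ — precisely the assertion flagged in the abstract. I would certify non--double--primitivity through the symmetry analysis of the appendix: a doubly primitive knot inherits an involution respecting its genus $2$ splitting, whereas a hyperbolic knot in $\P$ with a lens space surgery has at most a single strong involution, and one checks that the involution available to $K_n$ is incompatible with the doubly primitive structure. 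The hypothesis $|n|\ge 4$ enters exactly to push $K_n$ past the finitely many small--complexity exceptions that genuinely are doubly primitive, hence tunnel number one. The main obstacle is this last stage on both fronts: proving that tunnel number one together with a lens space surgery forces double primitivity (an equivariant Heegaard/thin--position argument for the unknotting tunnel relative to the surgery), and then verifying that the explicit $K_n$ fails that structure. Establishing that $K_n$ is hyperbolic with the small symmetry group claimed — the substance of the appendix — is the technical heart that makes the symmetry obstruction usable.
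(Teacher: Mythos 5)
Your reduction of the theorem to excluding $t(K_n)=1$ is right, and your treatment of the surgery statement and the upper bound $t(K_n)\le 2$ matches the paper (Montesinos trick for the banding in Figure~\ref{fig:P-235to2bridge}; the knot lies on the lifted genus $2$ Heegaard surface). But the route you propose for the lower bound has a fatal gap at its load-bearing step. The claim that a tunnel number one knot with a lens space surgery must be doubly primitive is not a provable lemma you can ``grant'': it is precisely the hard, open direction of the Berge Conjecture (even restricted to tunnel number one knots in $S^3$ it is unresolved), and in $\P$ there is no such structural theorem at all --- indeed the whole point of this paper is that the analogue of the Berge Conjecture \emph{fails} in $\P$, so leaning on it there is doubly suspect. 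Worse, even if that dichotomy held, your proposed certificate of non-double-primitivity does not work: doubly primitive knots lie on a genus $2$ Heegaard surface and are strongly invertible via the hyperelliptic involution of that splitting, so the appendix's conclusion (Theorem~\ref{thm:symm}: no symmetries or a single strong involution) is perfectly \emph{compatible} with being doubly primitive. There is no incompatibility to check, and in any case the appendix applies only to hyperbolic knots, and hyperbolicity of $K_n$ is never established. Your logic is also inverted relative to the paper's: the paper \emph{deduces} ``not doubly primitive'' from ``tunnel number $2$'' (since doubly primitive $\Rightarrow$ tunnel number one), whereas you try to prove ``not doubly primitive'' first and feed it back into the tunnel number --- so the one fact you could legitimately use is downstream of the thing you are trying to prove.

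The paper's actual argument is elementary by comparison and entirely different. If $t(K_n)=1$, then \emph{every} Dehn surgery on $K_n$ yields a manifold of Heegaard genus at most $2$. So one exhibits a single bad surgery: the $0$--framed banding along $\kappa_n$ (the framing induced by the bridge sphere) produces a link split by the bridge sphere $S$ into two tangles $T^\alpha_n=\tfrac{1}{n+2}+\tfrac13$ and $T^\beta_n=\tfrac{1}{1-n}+\tfrac{1}{-2}$, so the corresponding surgery on $K_n$ is a toroidal manifold $M^\alpha_n\cup_T M^\beta_n$ with $M^\alpha_n$ of type $D^2(|n+2|,3)$ and $M^\beta_n$ of type $D^2(|1-n|,2)$. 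Kobayashi's classification of genus $2$ toroidal manifolds then forces one side to be Seifert fibered with the property that filling the other side along the regular fiber slope gives a lens space; translating back through the double branched cover, this amounts to asking whether the pretzel fillings $P(1-n,2,2)$ and $P(n+2,3,-2)$ are two-bridge, which happens only for $n\in\{0,2\}$ and $n\in\{-1,-3\}$ respectively. Together with the degenerate cases $n\in\{-3,\dots,2\}$ (compressible $T$) and $n=3$ (Klein bottle fibration), this is exactly where the hypothesis $|n|\ge 4$ comes from --- not, as you guessed, from sporadic small examples that genuinely are doubly primitive. If you want to salvage your write-up, keep your first two paragraphs and replace the entire third by this surface-framed surgery computation.
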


These knots arose from an application of the ``longitudinally jointly primitive'' ideas presented in \cite{bhl}.  After observing that the negative Whitehead link is a two-component genus $1$ fibered link for which $-1$ surgery on both components produces $\P$, we found a family of jointly primitive knots in the twice-punctured torus bundle that is the exterior of this Whitehead link.  Our knots $K_n$ are the images of these jointly primitive knots in the $\P$ filling.  This is discussed further in \cite[Section 2.3.3]{bhl}.

\smallskip

Previously, the only known knots in $\P$ admitting a surgery to a lens space are surgery duals to Tange knots \cite{tangeknots} and certain Hedden knots \cite{hedden, rasmussen, baker-almostsimple}.  (See Section~\ref{sec:heddensknots} for a discussion of the Hedden knots.) These knots in $\P$ are all {\em doubly primitive}, they may be presented as curves in a genus $2$ Heegaard splitting that represent a generator in $\pi_1$ of each handlebody bounded by the Heegaard surface.  We see this as Tange knots and Hedden knots all admit descriptions by doubly pointed genus $1$ Heegaard diagrams. (Tange knots are all simple knots in lens spaces and Hedden knots are ``almost simple''.)  Equivalently, they are all $1$--bridge knots in their lens spaces.  Any integral surgery on a $1$--bridge knot in a lens space naturally produces a $3$--manifold with a genus $2$ Heegaard surface in which the dual knot sits as a doubly primitive knot.

The Berge Conjecture posits that any knot in $S^3$ with a lens space surgery is a doubly primitive knot \cite{berge}, cf.\ \cite[Question 5.5]{gordonsurvey}.  It is also proposed that any knot in $S^1 \times S^2$ with a lens space surgery is a doubly primitive knot \cite[Conjecture 1.1]{bbls1xs2}, \cite[Conjecture 1.9]{greene}.  By analogy and due to a sense of simplicity\footnote{The manifolds $S^3$, $\P$, and its mirror $-\P$ are the only homology $3$--spheres with finite fundamental group (by Perelman) and the only known irreducible L-space homology $3$--spheres, e.g.\ \cite{efteSFS, eftetorus}.}, one may suspect that any knot in $\P$ with a lens space surgery should also be doubly primitive. However this is not the case.  Since our knots $K_n$ have tunnel number $2$  for $|n|\geq 4$, they cannot be doubly primitive.

\begin{corollary}
There are knots in the Poincar\'e homology sphere with lens space surgeries that are not doubly primitive.  That is, the analogy to the Berge Conjecture fails for the Poincar\'e homology sphere. \qed
\end{corollary}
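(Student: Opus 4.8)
Since doubly primitive knots have tunnel number one, the Corollary is immediate from the tunnel-number-two conclusion of Theorem~\ref{thm:main}; the substance therefore lies in Theorem~\ref{thm:main}, and I organize the plan around its two assertions. For the surgery statement I would make the Montesinos trick precise. The double branched cover of $(S^3, P(-2,3,5))$ is the Brieskorn sphere $\Sigma(2,3,5)=\P$, so the arc $\kappa_n$ lifts to a knot $K_n \subset \P$ and the covering involution restricts to a strong involution of $K_n$. A rational tangle replacement downstairs lifts to a Dehn surgery upstairs; the $-1$--framed banding along $\kappa_n$ is such a replacement, and because the framing is taken against the bridge sphere the induced slope on $K_n$ is integral. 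As the banding turns $P(-2,3,5)$ into the two--bridge knot $B(3n^2+n+1,-3n+2)$, whose double branched cover is $L(3n^2+n+1,-3n+2)$, this integral surgery produces the asserted lens space; the identity $[n,-1,-n,3]=-\tfrac{3n^2+n+1}{-3n+2}$ is exactly the bookkeeping that identifies the two--bridge knot.

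For the upper bound $t(K_n)\le 2$ I would use that $\kappa_n$ is drawn in the sphere presenting $P(-2,3,5)$ as a $3$--bridge link. This sphere lifts to a genus $2$ Heegaard surface $\Sigma$ of $\P$, and $K_n$ lies on $\Sigma$. A curve lying on a genus $2$ Heegaard surface has tunnel number at most $2$, since it can be completed to a spine of a bounding handlebody by at most two arcs, so the exterior acquires a Heegaard splitting of genus at most $3$.

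The real content is the lower bound $t(K_n)\ge 2$ for $|n|\ge 4$, and this is where I expect the work to concentrate. A first step rules out double primitivity using Heegaard Floer homology: by Berge's correspondence the surgery dual of a doubly primitive knot is a simple knot, and simple knots are Floer simple, so a doubly primitive $K_n$ would force $\rk \HFK(L,K_n^*)=|H_1(L)|=3n^2+n+1$ for the dual $K_n^*\subset L$. I would compute $\HFK(L,K_n^*)$ from a doubly pointed (grid) diagram read off the dual arc $\kappa_n^*$ carried along in Figure~\ref{fig:P-235to2bridge}, and show its rank exceeds $3n^2+n+1$ once $|n|\ge 4$; this already yields the Corollary. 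The genuine difficulty is that failure of double primitivity is strictly weaker than failure of tunnel number one, since a knot may be tunnel number one without being a $(1,1)$--knot. To bridge this gap I would exploit that $K_n$ and $K_n^*$ share an exterior, so $t(K_n)=t(K_n^*)$, and argue that a genus $2$ splitting of this common exterior is incompatible with the present data: for instance, by showing that the splitting surface $\Sigma$ of the pair $(\P,K_n)$ has large distance, so that a Tomova--type uniqueness theorem forces it to be the unique minimal splitting and hence $t(K_n)=2$, or alternatively by combining hyperbolicity of $K_n$ with the symmetry classification of the appendix to exclude any competing genus $2$ splitting. Converting the Floer-theoretic failure of double primitivity into this stronger topological obstruction to tunnel number one is the main obstacle.
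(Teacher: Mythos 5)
Your reduction of the Corollary to Theorem~\ref{thm:main} --- doubly primitive knots have tunnel number at most one, so tunnel number two rules out double primitivity --- is exactly the paper's (one-line) proof, and your treatment of the surgery statement and of the upper bound $t(K_n)\le 2$ also matches the paper. The genuine gap is in your proposed Heegaard Floer shortcut. The premise that the surgery dual of a doubly primitive knot is a \emph{simple} knot is Berge's theorem for knots in $S^3$; it fails for knots in $\P$, and the paper is explicit about this: by Theorem~\ref{thm:hedden3.1}, the dual of a doubly primitive knot in an L-space homology sphere is a $1$--bridge knot and hence either a simple knot \emph{or} one of the Hedden knots $T_L$, $T_R$, and the duals in $\P$ of the Hedden knots are themselves doubly primitive. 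Now by Theorem~\ref{thm:properties}(3)\&(4), $p=2g(K_n)-1$ and $\rk\HFK(L(p,q),K_n^*)=p+2$ for \emph{every} $n$, not just $|n|\ge 4$; in particular the rank cannot detect the transition at $|n|=4$, since $K_n$ has tunnel number one for $-3\le n\le 2$. Moreover $p+2$ is precisely the rank realized by a Hedden knot dual to a knot in $\P$ with $p=2g-1$ (by the same results you would invoke), so your computation lands exactly on the value consistent with $K_n^*$ being $T_L$ or $T_R$, i.e.\ consistent with $K_n$ being doubly primitive. Excluding the Hedden knots requires a further argument --- for instance the homology-class/congruence argument (no solution to $k^2+k+1\equiv 0 \pmod p$) that the paper uses in its second corollary for $n=1$ --- and that argument is case-dependent in $n$.

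Your assessment of where the difficulty lies is also inverted relative to the paper: the paper never passes from ``not doubly primitive'' to ``tunnel number two''; it proves tunnel number two directly and deduces non-double-primitivity. Concretely, if $t(K_n)=1$ then every surgery on $K_n$ would have Heegaard genus at most $2$; but the surface-framed surgery (the $0$--framed banding along $\kappa_n$) is toroidal, splitting along an essential torus into Seifert fibered pieces of types $D^2(|n+2|,3)$ and $D^2(|1-n|,2)$, and Kobayashi's classification \cite{kobayashi} of genus-$2$ toroidal manifolds would force the filling of one piece along the regular-fiber slope of the other to be a lens space. Downstairs this says one of the pretzel links $P(1-n,2,2)$ or $P(n+2,3,-2)$ is two-bridge, which fails exactly when $n\notin\{-3,\dots,3\}$. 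By contrast, your proposed substitutes for this step (a Tomova-type distance bound, or the appendix's symmetry theorem) are undeveloped, and the appendix result concerns the symmetry group of a hyperbolic knot exterior, which has no evident bearing on Heegaard genus or tunnel number.
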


\begin{corollary}
Conjecture 1.10 of \cite{greene} is false. For example, the lens space $L(5,1)$ contains a knot $K_1^*$ with a surgery to $\P$ but does not contain a Tange knot or a Hedden knot with a $\P$ surgery.
\end{corollary}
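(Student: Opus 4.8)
The plan is to exhibit $L(5,1)$ as an explicit counterexample: I will show that it carries a knot with a surgery to $\P$ while carrying no Tange or Hedden knot with such a surgery. Since Conjecture 1.10 of \cite{greene} posits (in the shape recorded by the corollary) that every lens space admitting a surgery to $\P$ must already contain a Tange or Hedden knot realizing that surgery, producing one lens space where this fails refutes it.

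First I would produce the $\P$--surgery knot in $L(5,1)$ by specializing Theorem~\ref{thm:main} to $n=1$. There the knot $K_1 \subset \P$ admits an integral surgery to $L(3\cdot 1^2 + 1 + 1,\, -3\cdot 1 + 2) = L(5,-1)$. Reversing the surgery, the surgery dual $K_1^*$ is a knot in this lens space on which surgery recovers $\P$. The only delicate point is the orientation bookkeeping: one must match $L(5,-1)$ with $L(5,1)$ under the orientation conventions fixed for $\P$, keeping in mind the identifications $L(5,-1) = L(5,4) \cong -L(5,1)$ while the amphichiral $L(5,2) \cong L(5,3)$ is distinct. Tracking the orientation of $\P$ versus its mirror through the branched-cover and Montesinos construction of the preceding discussion, one confirms that $K_1^*$ is the desired knot in $L(5,1)$ admitting a surgery to $\P$.

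Next I would rule out Tange and Hedden knots in $L(5,1)$ with a $\P$--surgery. Both families are doubly primitive and come with explicit parameterizations in which the order of the resulting lens space is a determined, increasing function of the indexing parameter; in particular the doubly pointed genus one diagrams recalled in Section~\ref{sec:heddensknots} present the Hedden knots concretely. Since we require order exactly $5$, only finitely many -- indeed only small -- candidates can be relevant. For each I would compute the oriented homeomorphism type of the lens space it yields, using those diagrams together with the relations $L(p,q) \cong L(p,q')$ for $q' \equiv q^{\pm 1} \pmod p$, and verify that $L(5,1)$ never occurs: either no order-$5$ lens space arises from the families, or the ones that do are $-L(5,1)=L(5,4)$ or the amphichiral $L(5,2)$, never $L(5,1)$ itself.

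Combining the two steps, $L(5,1)$ contains a knot with a $\P$--surgery yet contains no Tange or Hedden knot with a $\P$--surgery, and so Conjecture 1.10 of \cite{greene} is false. I expect the main obstacle to be the non-existence step: it depends on having the complete and correct enumeration of the small Tange and Hedden knots and on computing their oriented lens-space outputs exactly, so that $L(5,1)$ is \emph{provably} absent rather than merely not obviously present. The orientation conventions distinguishing $L(5,1)$ from its mirror $L(5,4)$ must be maintained consistently throughout, since that chirality is exactly what makes the example decisive.
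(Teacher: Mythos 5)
Your existence step matches the paper's: set $n=1$ in Theorem~\ref{thm:main} to get $K_1 \subset \P$ with an integral surgery to $L(5,-1)$, and pass to the surgery dual $K_1^*$; the orientation bookkeeping you flag is a real but minor point. The genuine gap is in the non-existence step, which you explicitly defer rather than prove, and for which your proposed method does not work as stated for the Hedden knots. The Hedden knots $T_L$ and $T_R$ of Section~\ref{sec:heddensknots} are defined in \emph{every} lens space $L(p,q)$ with $p>q>0$, so restricting attention to lens spaces of order $5$ does not make them ``provably absent'': there are eight such knots (a $T_L$ and a $T_R$ in each of $L(5,1),\dots,L(5,4)$), and the issue is not which order-$5$ lens spaces contain Hedden knots but which of these particular knots admits a surgery to $\P$. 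Your plan to ``compute the oriented homeomorphism type of the lens space it yields'' answers the wrong question and provides no procedure for deciding whether a given Hedden knot has a $\P$ surgery; there is no parameterization of the Hedden family in which the lens space order is an ``increasing function of the indexing parameter'' that screens out these candidates.

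The paper closes exactly this gap with a homological criterion rather than an enumeration: a Hedden knot with a surgery to $\pm\P$ is homologous to the surgery dual of a Berge knot of type VII \cite{rasmussen,greene} (cf.\ Proposition~\ref{prop:almostsimple}), and such a dual exists in a lens space of order $p$ only if $k^2+k+1 \equiv 0 \pmod{p}$ has an integral solution. For $p=5$ the values of $k^2+k+1$ are $1,3,2,3,1 \pmod{5}$, so there is no solution, and hence no Hedden knot in any order-$5$ lens space has a $\P$ surgery. For the Tange knots the paper's check is a finite lookup in \cite[Table 2]{tangeknots}, which is close in spirit to the enumeration you propose, but you would still need to invoke or reproduce that table to conclude anything. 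Without the type VII congruence (or some equivalent explicit computation for the eight Hedden candidates), your argument does not establish the corollary.
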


\begin{proof}
For $n=1$, surgery on $K_1$ produces $L(5,-1)$ as shown in Figure~\ref{fig:P-235to2bridge}.
A quick check of \cite[Table 2]{tangeknots} shows that $L(5,-1)$ contains no Tange knots.  The Hedden knots with surgery to $\pm\P$ are homologous to Berge lens space knots of type VII \cite{rasmussen, greene}: one further observes that $L(5,-1)$ does not contain any of these knots since $k^2+k+1=0\pmod{5}$ has no integral solution.
\end{proof}

\begin{remark}\label{rem:cable}
The knot $K_1$ is actually the $(2,3)$--cable of the knot $J$ that is surgery dual to the trefoil in $S^3$.
\end{remark}

As we shall see in Theorem~\ref{thm:properties}(3), it is the $(2g(K_n)-1)$--surgery on our knots $K_n$ that produces a lens space; $g(K_n)$ is the Seifert genus of the knot $K_n$.   Hence they also provide counterexamples to \cite[Conjecture 1.7]{hedden} (cf.\ \cite[Conjecture 1.6]{bgh}) and \cite[Conjecture 1.10]{greene}.  It seems plausible that it is the largeness of the knot genus with respect to the lens space surgery slope that enables the failure of our knots to be doubly primitive.   Hedden and Rasmussen also observe a distinction at this slope for lens space surgeries on knots in L-space homology spheres \cite{hedden, rasmussen}. 
 Indeed, from the view of Heegaard Floer homology for integral slopes on knots in homology spheres,  this slope is right at the threshold at which a knot could have an L-space surgery \cite[Proposition 9.6]{OSRational} (see also \cite[Lemma 2.13]{HeddenCabling2})\footnote{This threshold may be lower for knots in homology spheres with $\tau(K) < g(K)$.} and just below what implies it has simple knot Floer homology \cite{eaman}\footnote{A knot $K^*$ in a rational homology sphere $Y$ has {\em simple knot Floer homology} if $\Rk \HFK(Y,K^*) = \Rk \HF(Y)$.  This definition does not require $Y$ to be an L-space itself.}.   With this in mind, we adjust and update \cite[Conjecture 1.10]{greene}.

\begin{conj}
Suppose that $p$--surgery on a knot $K$ in the Poincar\'e homology sphere produces a lens space.   If $p > 2g(K)-1$ is an integer, then $K$ is a doubly primitive knot. Furthermore it is surgery dual to one of the Tange knots.
\end{conj}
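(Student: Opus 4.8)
The plan is to pass to the surgery dual and reduce the conjecture to three inputs: a standard reduction, a Floer-theoretic rank computation that should be routine, and one enumerative/classification step that is the known principal obstruction.

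First I would pass to the dual knot. Write $L=L(p,q)$ for the lens space obtained by $p$--surgery and $K^*\subset L$ for the surgery dual, so that $|H_1(L)|=p$ and the dual slope returns $\P$. Because both $\P$ and $L$ are L-spaces and $p>0$, the knot $K$ is an L-space knot in $\P$: its knot Floer homology has the staircase form and $\tau(K)=g(K)$. Consequently the hypothesis $p>2g(K)-1$, i.e.\ $p\geq 2g(K)=2\tau(K)$, places the surgery strictly above the L-space threshold of \cite[Proposition 9.6]{OSRational}. The point of working with $K^*$ is that this genus bound on $K$ becomes a rank bound on the knot Floer homology of $K^*$.

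Second, I would feed the slope bound into the large-surgery formula. The inequality $p\geq 2g(K)$ is exactly the range in which the mapping-cone/large-surgery identification expresses $\HFK(L,K^*)$ as the associated graded of the filtered complex of $K$, and the staircase shape of $\HFK(\P,K)$ then forces $\rk\HFK(L,K^*)=\rk\HF(L)=p$. Thus $K^*$ has simple knot Floer homology in the sense of \cite{eaman}. I expect this step to be essentially bookkeeping once the staircase is in hand; the only care needed is the case $\tau(K)<g(K)$ flagged in the footnote, which cannot occur here since $K$ is an L-space knot.

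Third---and this is where the genuine difficulty lies---I would need to upgrade ``simple knot Floer homology'' to ``simple knot.'' The assertion that a knot in a lens space with $\rk\HFK=|H_1|$ must be a simple (grid-number-one) knot is the open Hedden--Rasmussen conjecture, known only in restricted settings; this is the main obstacle. Absent a general resolution, I would attempt it for the specific $L(p,q)$ arising here, exploiting that $K^*$ also carries a $\P$--surgery to bound its bridge number directly (for instance via thin-position or sutured-Floer estimates controlling bridge number by Floer rank), so as to conclude that $K^*$ is $1$--bridge in $L$.

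Finally, granting that $K^*$ is a simple knot in $L$ admitting a surgery to $\P$, I would invoke Tange's enumeration. A $1$--bridge knot's surgery dual is automatically doubly primitive, so $K$ is doubly primitive; and the simple knots in lens spaces carrying a $\P$--surgery are exactly the Tange knots tabulated in \cite[Table 2]{tangeknots}. Matching $K^*$ against this list---and confirming that the Hedden knots and the present knots $K_n$, whose lens space surgeries occur at the excluded slope $p=2g-1$, are genuinely excluded by the strict inequality---would identify $K^*$ as a Tange knot and hence $K$ as its surgery dual, completing the argument.
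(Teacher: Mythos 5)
The statement you are trying to prove is stated in the paper as a \emph{conjecture}: the paper offers no proof of it, and indeed explicitly frames it as an open problem (it is the authors' adjustment of Conjecture~1.10 of \cite{greene}, and the surrounding text says that even the companion Conjecture~\ref{conj:poincaredp} awaits verification that Tange's list is complete). So there is no argument in the paper to match yours against, and your proposal should be judged on whether it closes the conjecture on its own. It does not, and you correctly flag one of the two reasons yourself: your third step, upgrading ``simple knot Floer homology'' ($\rk\HFK(L,K^*)=p=\rk\HF(L)$) to ``$K^*$ is a simple knot,'' is precisely the open Berge-conjecture-type problem, and your suggested fallback (bounding the bridge number of $K^*$ directly by Floer rank via thin position or sutured estimates) does not correspond to any known result; no such rank-to-bridge-number implication is available. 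Note also that Hedden's Theorem~3.1 (Theorem~\ref{thm:hedden3.1} in the paper) cannot substitute here, since it \emph{assumes} $K^*$ is $1$--bridge, which is equivalent to the doubly primitive conclusion you are after --- invoking it would be circular.

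The second gap you do not flag: your final step asserts that ``the simple knots in lens spaces carrying a $\P$--surgery are exactly the Tange knots tabulated in \cite[Table 2]{tangeknots}.'' That completeness statement is itself open. Tange \emph{produced} a list of simple knots dual to knots in $\P$, and the paper says only that ``it is expected that this list is complete''; the paper's own Corollary about $L(5,-1)$ uses the table merely to check non-membership, not as a classification. So even granting your step three, your argument reduces the conjecture to a second unproven classification rather than completing it. The parts of your reduction that are sound --- passing to the dual, the rank computation $\rk\HFK(L,K^*)=p$ for $p\geq 2g(K)$ (which is Rasmussen's Theorem~2 \cite{rasmussen}, cf.\ \cite[Theorem 1.4]{hedden}, not something you need to rederive), and the exclusion of the Hedden knots and the knots $K_n$ by the strict inequality $p>2g(K)-1$ --- accurately capture why the conjecture is plausible and match the heuristic picture the paper paints, but the two classification steps are genuinely open, so what you have is a correct reduction of the conjecture to known open problems, not a proof.
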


\begin{conj}\label{conj:poincaredp}
If a knot in the Poincar\'e homology sphere is doubly primitive, then it is surgery dual to one of the Tange knots or one of the Hedden knots.
\end{conj}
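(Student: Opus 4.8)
The plan is to prove the classification directly, by first translating the problem into the lens space and then deploying the same gauge-theoretic lattice machinery that resolves the realization problem in $S^3$. As recalled above, a doubly primitive knot $K \subset \P$ is the surgery dual of a $1$--bridge knot $K^* \subset L(p,q)$ admitting an integral surgery back to $\P$, and conversely every such $K^*$ produces a doubly primitive knot. Thus it suffices to classify the $1$--bridge knots $K^*$ in lens spaces that admit a surgery to $\P$, and to check that the resulting list is exactly the surgery duals of the Tange knots (the simple knots) and the Hedden knots (the ``almost simple'' knots).

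First I would set up a negative-definite cobordism. The manifold $\P$ bounds the negative-definite $E_8$--plumbing $W$, while the mirror $-L(p,q)$ bounds its canonical negative-definite linear plumbing $P(p,q)$. Attaching the $2$--handle of the surgery along $K^*$ (equivalently, running the trace of the doubly primitive surgery) and capping off with $W$ yields a smooth $4$--manifold $X$ with $\partial X = L(p,q)$ whose intersection form is $E_8$ enlarged by a single vector recording the surgery framing and the homology class of $K$. Gluing $X$ to $-P(p,q)$ produces a closed, negative-definite $4$--manifold, so Donaldson's diagonalization theorem forces its intersection form to be standard, embedding it into the diagonal lattice $\Z^N$. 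This simultaneously embeds the linear lattice of $L(p,q)$ and the $E_8$--block-with-extra-vector, and the interaction of the two embeddings should be governed by a changemaker-type condition in the spirit of Greene's lens space realization theorem, now constrained by the rigid $E_8$ summand.

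Next I would carry out the combinatorial classification of these embeddings. Because $E_8$ is even, unimodular, and does not embed in $\Z^8$, its image in $\Z^N$ is essentially pinned down, and the orthogonal complement together with the changemaker vector should leave only a tightly controlled family of possibilities for $(p,q)$ and for the homology class $[K^*] \in H_1(L(p,q))$. I expect these to reduce to an explicit arithmetic condition—paralleling the $k^2 + k + 1 \equiv 0 \pmod p$ constraint already visible for the Hedden (Berge type VII) knots and the entries of Tange's census—after which each surviving class is matched against a doubly pointed genus $1$ Heegaard diagram to confirm it is realized by a Tange or Hedden knot. The $d$--invariant of $\P$, computed from the mapping cone surgery formula applied to the (almost) simple knot Floer homology of $K^*$, provides an independent check that no spurious solutions survive.

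The main obstacle will be the combinatorial heart of the second and third steps: classifying changemaker-type lattice embeddings in the presence of the $E_8$ block is substantially more delicate than the diagonal case handled by Greene, and organizing the casework so that exactly the Tange and Hedden families emerge, with neither missing nor extraneous solutions, is where the real work lies. Secondary difficulties include verifying that every doubly primitive knot in $\P$ has its expected genus and surface slope, so that $X$ is of the required definiteness, and ruling out $1$--bridge knots whose knot Floer homology fails to be simple or almost simple and would therefore escape the surgery-formula computation.
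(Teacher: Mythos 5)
You are attempting to prove a statement that the paper itself does not prove: Conjecture~\ref{conj:poincaredp} is stated as an open conjecture, and the paper's only contribution toward it is a reduction. Namely, doubly primitive knots in $\P$ are dual to $1$--bridge knots in lens spaces with an integral surgery to the L-space homology sphere $\P$, so Theorem~\ref{thm:hedden3.1} (Hedden) shows the dual is either a simple knot or one of $T_L$, $T_R$, and Proposition~\ref{prop:almostsimple} pins down the Hedden-knot case (only $(T_L)_{-1}$ can be $\P$). What remains, as the paper says explicitly, is to verify that Tange's census \cite{tangeknots} of simple knots in lens spaces dual to knots in $\P$ is complete. Your closing step --- ``check that the resulting list is exactly the surgery duals of the Tange knots and the Hedden knots'' --- is therefore not a verification at the end of the argument; it \emph{is} the open problem, and nothing in your outline resolves it. Note also that your plan never invokes the $1$--bridge structure via Theorem~\ref{thm:hedden3.1} at all, so even the part of the conjecture the paper does settle (the Hedden-knot branch) is not recovered by your outline without further argument.

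There is moreover a concrete failure in your lattice step. With the paper's orientation conventions ($d(\P)=2$), $\P$ does bound the negative-definite $E_8$--plumbing $W$, but the trace of the \emph{positive} $p$--surgery contributes a homology class of square $p>0$, so your $X = W \cup (\text{trace})$ has $b_2^+ = 1$ and the closed manifold obtained by capping with the linear plumbing is \emph{indefinite}, not negative definite. Your argument is also internally inconsistent: a unimodular sublattice of $\Z^N$ splits off as an orthogonal direct summand, so $E_8$ embeds in no diagonal lattice of \emph{any} rank (not merely $\Z^8$); since gluing along the homology sphere $\P$ makes the $E_8$ block an orthogonal summand of the closed form, a negative-definite assembly would contradict Donaldson's theorem outright --- which only proves no such definite assembly exists, and is why the construction comes out indefinite. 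For an odd indefinite form, diagonalization is automatic (Hasse--Minkowski) and carries essentially no rigidity: one gets an embedding of the linear lattice of $L(p,q)$ into an indefinite unimodular lattice, where the changemaker analysis powering Greene's realization theorem \cite{greene} (which crucially uses $d=0$, i.e., capping $S^3$ with $B^4$) is unavailable. The constraints that do survive are correction-term/sharpness conditions on the negative-definite filling of $\P$, which is precisely the machinery Tange already used \cite{tange-ozszcorrectionterm,tangeknots} to generate his list without establishing its completeness. So the proposal, as written, would fail at its combinatorial heart and does not close the conjecture.
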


As mentioned in the paragraph following its statement, \cite[Theorem 3.1]{hedden} (presented here in Theorem~\ref{thm:hedden3.1}) applies to any L-space homology sphere, not just $S^3$. Thus, since doubly primitive knots are surgery dual to one-bridge knots in lens spaces, \cite[Theorem 3.1]{hedden} implies that such surgery duals are either simple knots or one of the Hedden knots  $T_L$ or $T_R$.  In Section~\ref{sec:heddensknots} we clarify and correct our work in \cite{baker-almostsimple} and further classify when Hedden's knots are dual to integral surgeries on knots homology spheres, $\P$ in particular.  Tange has produced a list of simple knots in lens spaces that are surgery dual to knots in $\P$ \cite{tangeknots}, and it is expected that this list is complete.  Verifying its completeness will  affirm Conjecture~\ref{conj:poincaredp}.

\bigskip

Let us collect various properties of our knots.
\begin{theorem}\label{thm:properties}
Let $p=3n^2+n+1$ and $q=-3n+2$. Then the following hold for our family of knots $K_n$ in $\P$:
\begin{enumerate}
\item Positive $p$--surgery on $K_n$ produces a lens space $L(p,q)$.
\item $K_n$ is fibered and supports the tight contact structure on $\P$.
\item $g(K_n)=(p+1)/2$
\item $\Rk\HFK(L(p,q), K_n^*) = p+2$
\item Let $T_n$ be the $(3n+1,n)$--torus knot in $S^3$, and note that $p$--surgery on $T_n$ also produces $L(p,q)$.  The surgery dual to $K_n$ is homologous to the surgery dual to $T_n$.
\item $\Delta_{K_n}(t) = \Delta_{T_n}(t) - (t^{(p-1)/2} +  t^{-(p-1)/2})+(t^{(p+1)/2} +  t^{-(p+1)/2})$.
\end{enumerate}
\end{theorem}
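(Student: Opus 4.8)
\emph{Strategy.} The plan is to treat (1) and (5) as the topological input, deduce fiberedness and the contact statement (2) from the existence and positivity of the surgery, and then derive the Alexander polynomial (6) as the analytic crux, from which the genus (3) and the rank (4) fall out. Item (1) is essentially the construction preceding Theorem~\ref{thm:main}: the $-1$--framed band on $\kappa_n$ realizing $B(3n^2+n+1,-3n+2)$ lifts, through the double branched covers and the Montesinos trick, to a single integral Dehn surgery on $K_n\subset\P$ whose result is the double branched cover of that two--bridge knot, namely $L(p,q)$. One only has to track the sign of the band and the induced orientation to confirm that the slope is $+p$ and that the lens space is $L(p,q)$ rather than its mirror. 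For (5) I would read the homology class of the surgery dual $K_n^\ast$ off the surgery diagram via the linking data of the lift, and compare it with the class of the dual of the classical $p=(3n+1)n+1$ surgery on $T_n=T(3n+1,n)$; since the class of a surgery dual in $L(p,q)$ is pinned down modulo the evident symmetries by the surgery coefficient together with one linking number, matching these data identifies the two classes.

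\emph{Fiberedness and contact structure.} With (1) in hand, $K_n$ has a lens space (hence L--space) surgery, and $\P$ is an L--space homology sphere, so $K_n$ is fibered by Ni's theorem that a knot in an L--space homology sphere admitting an L--space surgery is fibered; this is the first half of (2). That the resulting open book supports the \emph{tight} contact structure on $\P$ I would deduce from the positivity of the surgery together with the results of Hedden and of Ozsv\'ath--Szab\'o \cite{hedden,rasmussen}: a positive L--space surgery forces the generator of the top Alexander grading of $\HFK(\P,K_n)$ to represent a nonzero contact class, so the supported structure is tight. Granting (6), item (3) is then immediate, for a fibered knot has symmetrized Alexander polynomial monic of top degree equal to its Seifert genus, and the formula in (6) exhibits a monic top term $t^{(p+1)/2}$ lying strictly above every term of $\Delta_{T_n}(t)$ and above $t^{(p-1)/2}$; hence $g(K_n)=(p+1)/2$, equivalently $p=2g(K_n)-1$, the threshold slope emphasized in the introduction.

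\emph{Alexander polynomial and rank.} The heart of the argument is (6), which I would derive together with (4) from the surgery model for $CFK^\infty(\P,K_n)$. Because $K_n$ is an L--space knot, its full knot Floer complex is a staircase determined by $\Delta_{K_n}$, and the large--surgery description expresses the hat--flavored knot Floer homology $\HFK(L(p,q),K_n^\ast)$ in each $\mathrm{Spin}^c$ structure as the homology of an associated hook complex $\hat A_i$ built from that staircase. To pin down the staircase I would invoke the Ozsv\'ath--Szab\'o correction--term surgery formula \cite{OSRational}: the $d$--invariants of $L(p,q)$ are classically computable, and comparing $\P_p(K_n)=L(p,q)$ against the baseline $\P_p(U)=\P\#L(p,1)$ expresses each torsion coefficient of $K_n$ in terms of a lens space $d$--invariant and the constant $d(\P)$. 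Running the identical computation for $S^3_p(T_n)=L(p,q)$ and subtracting, with the $\mathrm{Spin}^c$ identification supplied by (5), isolates $\Delta_{K_n}-\Delta_{T_n}$ as exactly $-(t^{(p-1)/2}+t^{-(p-1)/2})+(t^{(p+1)/2}+t^{-(p+1)/2})$. Reading the staircase back, a single $\hat A_i$ (the self--conjugate $\mathrm{Spin}^c$ structure, since $p$ is odd) acquires rank $3$ where a simple knot would have rank $1$ while all others keep rank $1$, giving $\rk\HFK(L(p,q),K_n^\ast)=p+2$, which is (4).

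\emph{Main obstacle.} I expect the real difficulty to be the bookkeeping in the last step: correctly matching the $\mathrm{Spin}^c$ structures on $L(p,q)$ arising from the two different presentations (surgery on $K_n\subset\P$ versus on $T_n\subset S^3$), and tracking the grading shift forced both by the nonzero correction term $d(\P)$ and by the genus gap $g(K_n)-g(T_n)=2n+1$, so that the $d$--invariant comparison returns torsion coefficients that are genuinely nonnegative and supported in the correct range. This is precisely where (5) does real work, and an off--by--one in the identification would corrupt both (6) and (4). A secondary subtlety is to verify, rather than assume, that the slope equals $2g(K_n)-1$ and not something larger: this threshold behavior must be extracted from the degree of $\Delta_{K_n}$ itself, not presupposed from the mere existence of an L--space surgery, for which \cite{OSRational} only gives the inequality $p\ge 2g(K_n)-1$.
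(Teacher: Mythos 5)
Your overall architecture is close to the paper's (which also derives (3), (4) and (6) from (5) via a correction-term/Alexander-polynomial comparison, packaged there as Theorem~\ref{thm:largesurgery}), but there is a genuine gap exactly at item (5), and everything downstream in your plan --- the $\mathrm{Spin}^c$ identification, hence (6), (3) and (4) --- depends on it. You propose to prove (5) on the grounds that ``the class of a surgery dual in $L(p,q)$ is pinned down modulo the evident symmetries by the surgery coefficient together with one linking number.'' That principle is false. By Rasmussen's criterion (cited in the paper via \cite[Lemma 2.6]{rasmussen}), the homology class $k$ of a dual to a positive integral homology-sphere surgery satisfies the quadratic congruence $k^2 \equiv -q \pmod{p}$, and when $p$ is composite this congruence generally has solutions that are not related by $k \mapsto -k$ or by the Heegaard-swap symmetry; indeed, distinct Berge knots can yield the same $L(p,q)$ by $p$--surgery with non-homologous duals, which is precisely why Greene's and Rasmussen's work must track homology classes so carefully. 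Even in cases where all solutions do lie in one symmetry orbit, ``related by a homeomorphism of $L(p,q)$'' is weaker than ``homologous,'' and it is the latter that statement (5) asserts and that your $d$--invariant comparison requires, since an abstract homeomorphism need not be compatible with the two surgery descriptions. So (5) needs an actual argument, not a counting principle. The paper supplies one geometrically (Lemma~\ref{lem:homologous}): the dual arc $\kappa_n^*$ on $B(3n^2+n+1,-3n+2)$ is isotoped, after two clasp changes, into the bridge sphere; clasp changes lift to crossing changes in the double branched cover, so $K_n^*$ is homotopic --- hence homologous --- to a knot $T_n^*$ in the Heegaard torus; a banding of the two-bridge knot along $\tau_n^*$ to the unknot shows $T_n^*$ is dual to a knot in $S^3$, and another banding to $B(n,-1)\#B(3n+1,3)$ exhibits a reducible surgery on that knot, identifying it as the $(3n+1,n)$--torus knot by Moser's classification.

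Two smaller points. For the positivity in (1), the paper does not track band signs and orientations through the double branched cover (which is delicate and error-prone); it observes that Theorem~\ref{thm:main} gives a lens space by $\pm p$--surgery and then invokes Tange's theorem \cite[Theorem 1.1]{tange-nonexistence} that any integral lens space surgery on a knot in $\P$ is positive --- a cleaner route you could adopt verbatim. Your treatment of (2) matches the paper's (fiberedness via Ozsv\'ath--Szab\'o/Ni, tightness via nonvanishing of the contact invariant, both packaged in \cite[Theorem 3.1]{tange-nonexistence}), and your correction-term derivation of (6), (3), (4) is in substance a re-proof, in this special case, of Theorem~\ref{thm:largesurgery}, whose argument in the paper runs through Rasmussen's simple-knot correspondence, the width theorem, Greene's lens space realization theorem, and Tange's result that $\tilde{\Delta}_K$ modulo $t^p-1$ depends only on the lens space and the homology class of the dual. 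That machinery is sound and, as you correctly note, non-circular (since $2g-1\le p$ holds a priori, the torsion coefficients recovered from $d$--invariants determine the full polynomial and hence the genus); but it cannot get off the ground until (5) is actually established.
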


Theorem~\ref{thm:properties}(5) is given as Lemma~\ref{lem:homologous}.  The remainder of Theorem~\ref{thm:properties} follows from assembling the works of Hedden \cite{hedden}, Rasmussen \cite{rasmussen}, and Tange \cite{tange-nonexistence,tange-ozszcorrectionterm}.  In fact, appealing to Greene's  proof of the Lens Space Realization Problem \cite{greene}, we tease out the following general theorem from which Theorem~\ref{thm:properties}(3)\&(6) follow.

\begin{theorem}\label{thm:largesurgery}
Suppose that $p$--surgery on knot $K$ in an L-space homology sphere $Y$ with $d(Y)=2$ produces the lens space $L(p,q)$.   
Then $p=2g(K)-1$ if and only if  $p$--surgery on some Berge knot $B$ in $S^3$ also produces the lens space $L(p,q)$ in which the surgery duals to $K$ and $B$ are homologous.

When this holds,   $\Delta_K(t) = \Delta_B(t) -(t^{(p-1)/2} +  t^{-(p-1)/2})+(t^{(p+1)/2} +  t^{-(p+1)/2})$.
\end{theorem}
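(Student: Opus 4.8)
The plan is to compare the Heegaard Floer correction terms ($d$--invariants) of the lens space $L(p,q)$ computed from two different surgery descriptions. Since $Y$ is an L--space homology sphere and $L(p,q)$ is an L--space, the hypothesis forces $K$ to be an L--space knot; in particular (following the framework of \cite{hedden, rasmussen} for knots in L--space homology spheres), the large surgery formula provides non-negative integers $V_0 \ge V_1 \ge \cdots$, with $V_i - V_{i+1} \in \{0,1\}$ and $V_i = 0$ for $i \ge g(K)$, so that, labelling the Spin$^c$ structures by $i$,
\[
d(L(p,q), [i]) = d(Y) + d(L(p,1), [i]) - 2 V_i^K,
\]
valid once $p \ge 2g(K)-1$. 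The inequality $p \ge 2g(K)-1$ itself holds for every L--space surgery by \cite[Proposition 9.6]{OSRational}, and the identical formula, with $d(S^3)=0$, applies to any Berge knot $B \subset S^3$ producing $L(p,q)$.

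The first key observation is that the left-hand side depends only on $L(p,q)$ together with the labelling of its Spin$^c$ structures by the homology class of the surgery dual. Thus the hypothesis that the duals of $K$ and $B$ are homologous means the two instances of the formula employ the same identification of Spin$^c$ structures, and subtracting them gives $2(V_i^K - V_i^B) = d(Y) = 2$, i.e.\ the clean additive relation $V_i^K = V_i^B + 1$ across the surgery range $0 \le i \le (p-1)/2$. Both directions of the equivalence then follow from the edge behaviour of the $V_i$. For the reverse direction, $V_{(p-1)/2}^B \ge 0$ forces $V_{(p-1)/2}^K \ge 1$, hence $(p-1)/2 \le g(K)-1$ and $p \le 2g(K)-1$; combined with $p \ge 2g(K)-1$ this yields $p = 2g(K)-1$. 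For the forward direction, assuming $p = 2g(K)-1$, the leading coefficient $+1$ of the Alexander polynomial of an L--space knot gives $V_{g(K)-1}^K = 1$, so the shifted sequence $V_i^B := V_i^K - 1$ is non-negative, has steps in $\{0,1\}$, and vanishes at $i = g(K)-1$.

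The substantive step in the forward direction is to certify that this shifted $d$--invariant data is realized by an honest knot in $S^3$. Here I would invoke Greene's solution to the Lens Space Realization Problem \cite{greene}: one checks that the sequence $V_i^B$ assembles into a changemaker vector for $L(p,q)$ carrying the same homology class, so that the realizing knot is necessarily a Berge knot $B$ whose dual is homologous to $K^*$. I expect this verification — translating the clean additive shift of the $V_i$ into Greene's integral changemaker condition while tracking the homology class of the dual — to be the main obstacle, since the Floer-theoretic bookkeeping is elementary but its compatibility with the combinatorial structure underlying Greene's classification requires care.

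The Alexander polynomial identity is then a formal consequence of $V_i^K = V_i^B + 1$. For L--space knots the $V_i$ are the torsion coefficients $t_i = \sum_{j \ge 1} j\, a_{i+j}$ of the symmetric Alexander polynomial, so I would solve for the difference $\Delta_K(t) - \Delta_B(t)$, whose torsion coefficients equal $1$ for $0 \le i \le g(K)-1$ and $0$ for $i \ge g(K)$. Working downward from the top grading $g(K) = (p+1)/2$ pins this difference down uniquely as
\[
(t^{(p+1)/2} + t^{-(p+1)/2}) - (t^{(p-1)/2} + t^{-(p-1)/2}),
\]
which is precisely the asserted formula.
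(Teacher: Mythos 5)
Your backward direction and your derivation of the Alexander polynomial identity are essentially sound: comparing the two large-surgery $d$--invariant formulas for $L(p,q)$ (legitimate because homologous duals induce the same labelling of Spin$^c$ structures, as made precise in Rasmussen's and Tange's work) gives $V_i^K = V_i^B + 1$, and the edge behaviour of the $V_i$ then forces $p = 2g(K)-1$. This is a more computational variant of what the paper does; the paper instead passes to the simple knot $J^*$ homologous to $K^*$ and uses Rasmussen's theorem that, when $p \neq 2g(K)-1$, the dual $K^*$ has knot Floer homology isomorphic to that of $J^*$, so the two surgered homology spheres would share $d$--invariants. (Also, your appeal to ``the leading coefficient of an L--space knot'' for $V^K_{g(K)-1}=1$, and to $V_i = t_i$, needs Tange's structure theorem for knots in L--space homology spheres rather than the $S^3$ theory; that is a citation issue, not a gap.)

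The forward direction, however, has a genuine gap, and it sits exactly at the step you flag as the main obstacle. Greene's theorem is not a realization theorem for abstract numerical data: its input is an honest knot --- a knot in $S^3$, or, in the refined form the paper uses, a knot in $L(p,q)$ admitting an integral surgery to an L--space homology sphere with $d=0$ --- from which one builds a negative-definite $4$--manifold and applies Donaldson/Elkies-type arguments to obtain a changemaker lattice embedding. The shifted sequence $V_i^B := V_i^K - 1$ is only a necessary numerical condition; nothing in \cite{greene} (or elsewhere) says that such a sequence ``assembles into'' a changemaker embedding of the linear lattice associated to $L(p,q)$, and your knot $K$ lives in a homology sphere with $d(Y)=2$, where the definite-form argument produces embeddings into lattices containing $E_8$ rather than into $\Z^n$, so no changemaker structure is available from $K$ itself. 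The missing idea --- which is how the paper closes this gap --- is to produce an actual auxiliary knot with a $d=0$ surgery: by Rasmussen's Theorem 2, the simple knot $J^*$ in $L(p,q)$ homologous to $K^*$ admits an integral surgery to an L--space homology sphere $Y_J$; when $p = 2g(K)-1$, Ni's width theorem together with Rasmussen's Proposition 5.4 gives $d(Y_J) = d(Y) - 2 = 0$; and only then does Greene's identification of the pairs (lens space, homology class) containing knots with $d=0$ integral L--space homology sphere surgeries apply, showing that $Y_J = S^3$ and that $J^*$ is the dual of a Berge knot $B$ (using that Berge duals are simple and that simple knots are unique in their homology class). Without this intermediate geometric step, your plan cannot be completed by Floer-theoretic bookkeeping alone.
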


%
%

\section{Questions about knots in the Poincar\'e homology sphere with lens space surgeries.}

\subsection{Homology classes of knots}
\begin{question}\label{ques1}
\quad
\begin{enumerate}
\item For which Berge knots $B$ that have a positive, odd $p$--surgery producing the lens space $L(p,q)$, is there a knot $K$ in $\P$ such that $2g(K)-1=p$, $p$--surgery produces the lens space $L(p,q)$, and the surgery duals $B^*$ and $K^*$ are homologous?

\item For a Berge knot $B$ as above, can there be two distinct knots $K_1$ and $K_2$ in $\P$ with $p$--surgery to $L(p,q)$ such that the surgery duals $B^*$, $K_1^*$, and $K_2^*$ are all homologous?
\end{enumerate}
\end{question}

Presently the only Berge knots known to answer Question~\ref{ques1} (1) are the Berge knots of type VII (those that embed in the fiber of the trefoil) due to \cite{hedden,baker-almostsimple} and the $(3n+1,n)$--torus knots with $p=3n^2+n+1$ due to our knots introduced here.   Indeed, what about the case of the $(3n+1,n)$--torus knots when $p=3n^2+n-1$?

To clarify the word `distinct' in Question~\ref{ques1} (2), note that the mapping class group of the Poincar\'e homology sphere is trivial  \cite[Th\`{e}or\'{e}m 3 \& Corollaire 4]{BoileauOtal}.  Thus knots in $\P$ that are related by a diffeomorphism of $\P$ are also isotopic.

\subsection{Symmetries}
One may observe that,  thus far, all the known examples of knots in $\P$ with a lens space surgery are strongly invertible.  Hence there is an involution of $\P$ taking each of these knots with some orientation to its reverse.   Wang-Zhou have shown that if a knot  in $S^3$ other than a torus knots has a non-trivial lens space surgery, then its only possible symmetry is a strong involution \cite{wangzhou}. 
\begin{question} \label{ques:symm}\quad 
\begin{enumerate}
\item Is every knot in $\P$ with a lens space surgery strongly invertible?
\item What are the possible symmetries of a knot in $\P$ with a lens space surgery?
\end{enumerate}
\end{question}
In the appendix to this article, Hoffman provides Theorem~\ref{thm:symm} which addresses Question~\ref{ques:symm}(2): A hyperbolic knot in $\P$ with a lens space surgery either has no symmetries or just a single strong involution.

\bigskip

\subsection{Hopf plumbings}
Let us inquire about another potential analogy with the Berge knots in $S^3$.
The Giroux Correspondence says that any two fibered knots supporting the same contact structure are related by a sequence of plumbings and de-plumbings of positive Hopf bands \cite{giroux}.
Since the Berge knots (with positive surgeries to lens spaces) can all be expressed as closures of positive braids\footnote{This is remarked preceding the Conjecture of \cite{godateragaito}, for example.}, each Berge knot can be obtained from the unknot by a sequence of plumbings of Hopf bands. No de-plumbings are necessary.

In $\P$ the unknot is not fibered, but the knot $J$ that is $-1$--surgery dual to the (negative) trefoil is.  As we shall see in Lemma~\ref{lem:gofk}, $J$ is a reasonable surrogate for the unknot in $\P$: it is the unique genus one fibered knot in $\P$, and it supports the tight contact structure on $\P$.
Since a knot in $\P$ with a lens space surgery is fibered and supports the tight contact structure (by Theorem~\ref{thm:properties}(2)), the Giroux Correspondence says that it may be obtained from $J$ by a sequence of plumbings and de-plumbings of positive Hopf bands.

\begin{question}
 If a knot in $\P$ has a lens space surgery, then can it be obtained from $J$ by a sequence of plumbings of Hopf bands?
\end{question}

We have yet to systematically check this question for the knots produced here or for the duals to the Tange and Hedden knots. 
As noted in Remark~\ref{rem:cable}, the knot $K_1$ is the $(2,3)$--cable of $J$. One may use this to show that $K_1$ is indeed obtained by plumbing two Hopf bands onto $J$.


\section{Notation and conventions}\label{sec:notation}
Let $K$ be a knot in an oriented $3$--manifold $M$.  Choose an orientation on $K$ and let $\mu$ be a meridian of $K$ in the torus $\bdry \nbhd(K)$ that positively links $K$.   Let $\lambda$ be an oriented curve in $\bdry \nbhd(K)$ that is isotopic to $K$ in $\nbhd(K)$; if $K$ is null-homologous in $M$, we choose $\lambda$ so that it is null-homologous in $M-\nbhd(K)$.   If $\gamma$ is an essential simple closed curve in $\bdry \nbhd(K)$, then when it is oriented $[\gamma] = p[\mu]+q[\lambda] \in H_1(\bdry \nbhd(K))$ where $p,q$ are coprime integers; changing the orientation of $\gamma$ changes the signs of both $p$ and $q$.   We refer to both the unoriented isotopy class of $\gamma$ in $\bdry \nbhd(K)$ and the  number $p/q \in \Q \cup \{\infty\}$ as a {\em slope} and (when $\lambda$ is null-homologous) say it is {\em positive} if $0<p/q<\infty$.   If the slope is integral, we also say it is a {\em longitude} or a {\em framing} of $K$.

The result of {\em $\gamma$--Dehn surgery} on the knot $K \subset M$ is the manifold  $M_K(\gamma)$
obtained by attaching $S^1\times D^2$ to $M-\nbhd(K)$ along the torus $\bdry \nbhd(K)$ so that ${\rm pt} \times \bdry D^2$ is identified with the slope $\gamma$.    The image of the curve $S^1 \times {\rm pt}$ in $M_K(\gamma)$ is a knot called the {\em surgery dual} to $K$ and denoted $K^*$.  Observe that $\gamma$ is the meridian of $K^*$ and $\mu$--surgery on $K^*$ returns $M$ with surgery dual $K=(K^*)^*$.

If $M$ is the double branched cover of $S^3$ over a link $L$, an arc $\kappa$ such that $\kappa \cap L = \bdry \kappa$  lifts to a knot $K$ in the cover $M$.  Via the Montesinos trick \cite{montesinostrick}, each integral surgery on $K$ corresponds to a {\em banding} along $\kappa$.  With $I=[-1,1]$, if $R=I \times I$ is a disk embedded in $S^3$ such that $R \cap L = \bdry I \times I$ and $\{0\} \times I = \kappa$, then the link $L' = (L - \bdry I \times I) \cup I \times \bdry I)$ is the result of a banding along $\kappa$.  The arc $\kappa^* = I \times \{0\}$ is the dual arc of the banding.  In the double branched cover over $L'$, $\kappa^*$ lifts to the dual knot $K^*$ of the corresponding integral surgery on $K$.

\smallskip
The lens space $L(p,q)$ is defined to be the manifold that results from $-p/q$--surgery on the unknot in $S^3$.  The two-bridge link $B(p,q)$ is the link in $S^3$ whose double branched cover is $L(p,q)$ \cite{HR}.  Using the continued fraction $[x_1, x_2, \dots, x_n] = x_1 -1/(x_2-1/(\dots -1/x_n))$ to express $-p/q$ describes the two-bridge link $B(p,q)$ geometrically in plat presentation as in the lower left of Figure~\ref{fig:P-235to2bridge}.

\smallskip
A non-trivial knot in a lens space $L(p,q)$ is {\em simple} (or {\em grid number one}) if, in the standard genus one Heegaard diagram of the lens space, it may be represented by two of the $p$ intersection points.  The simple knot is then the union of the arcs connecting those points in the two meridional disks whose boundaries are described by the diagram.  Equivalently, a simple knot (including the trivial knot) is a knot represented by a doubly pointed genus $1$ Heegaard diagram of $L(p,q)$ that has $p$ intersection points. There is one for each homology class $k \in H_1(L(p,q))$ in $L(p,q)$, denoted $K(p,q,k)$. See for example \cite{rasmussen,hedden,bgh}.

\section{Proofs}

\begin{proof}[Proof of Theorem~\ref{thm:main}]
Figure~\ref{fig:P-235to2bridge} exhibits arcs $\kappa_n$ on the pretzel knot $P(-2,3,5)$ that have a banding to a two-bridge link.
By passing to the double branched covers (and using the Montesinos Trick), this describes knots $K_n$ in $\P$ that have an integral surgery to a lens space.

Since $K_n$ lies in a genus $2$ Heegaard surface, its tunnel number is at most $2$.
If $K_n$ were to have tunnel number one, then every surgery on $K_n$ would have Heegaard genus at most $2$.  However we will see that surgery along the framing induced by the Heegaard surface produces a toroidal manifold which, according to Kobayashi's classification \cite{kobayashi}, does not have Heegaard genus $2$. (The same scheme was recently employed in \cite{EMJMM} to demonstrate a family of strongly invertible knots in $S^3$ with a Seifert fibered surgery and tunnel number $2$.)  

Kobayashi shows that if $M = M^\alpha \cup_T M^\beta$ is a genus $2$ manifold decomposed along a torus $T$ into two atoroidal manifolds, then one of $M^\alpha$ or $M^\beta$ admits a Seifert fibering over the disk with  $2$ or $3$ exceptional fibers or over the Mobius band with up to $2$ exceptional fibers  such that filling the other along the slope induced by a regular fiber in $T$ produces a lens space.  (This is more general than Kobayashi's classification but suitable for our needs.)   The slope of a regular fiber may be identified since filling one of these Seifert fibered spaces produces a reducible manifold if and only if the filling is done along the slope of a regular fiber.  

Figure~\ref{fig:toroidal}(Left) shows that the result of a $0$--framed banding along $\kappa_n$ has a sphere $S$ dividing it into two $2$--string tangles.  Up to homeomorphism, these two tangles are the tangle sums $T^\alpha_n = \tfrac{1}{n+2} + \tfrac{1}{3}$ and $T^\beta_n = \tfrac{1}{1-n} + \tfrac{1}{-2}$.  Their corresponding double branched covers, $M^\alpha_n$ and $M^\beta_n$, are in general each Seifert fiber spaces over the disk with exactly two exceptional (and non-degenerate) fibers:  $M^\alpha_n$ has type $D^2(|n+2|,3)$ and $M^\beta_n$ has type $D^2(|1-n|,2)$.   This fails for $M^\alpha_n$ when $n=-1,-2,-3$ and for $M^\beta_n$ when $n=0,1,2$; in each, the middle value yields a degenerate Seifert fibration while the other values yield solid tori.  In these cases the torus $T$ that is the double branched cover of $S$ is compressible and so Kobayashi's classification does not apply; moreover, one may observe that $K_n$ has tunnel number one in these cases.  Hence we assume $n \not \in \{-3,-2,-1,0,1,2\}$.

When $n=-1$ or $3$, $M^\beta_n$ has type $D^2(2,2)$ and thus is a twisted $I$--bundle over the Klein bottle.  Therefore it has an alternative Seifert fibration over the Mobius band with no exceptional fibers.  We already assume $n \neq-1$.  We shall assume $n\neq3$ as well.

\begin{figure}
\centering
\includegraphics[width=5.5in]{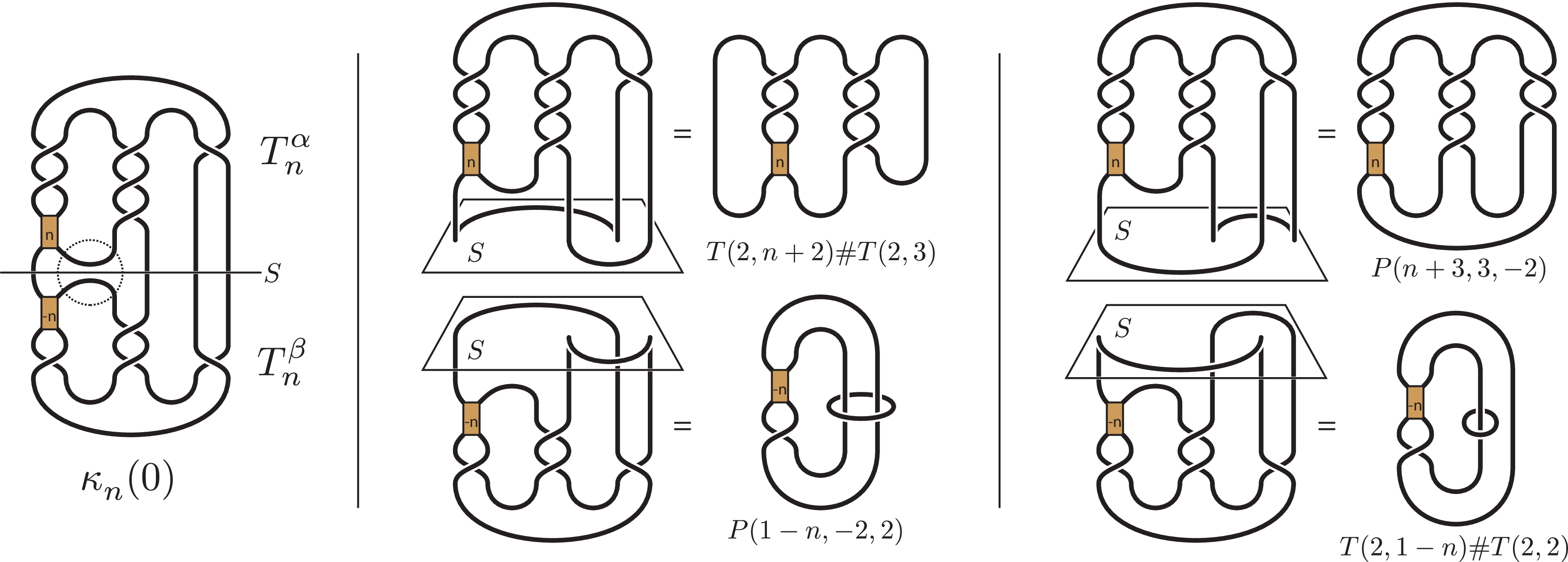}
\caption{(Left) The bridge sphere $S$ for $P(-2,3,5)$ splits $\kappa_n(0)$ into two tangles, $T^\alpha_n$ above and $T^\beta_n$ below. (Middle) Filling $T^\alpha_n$ with a rational tangle to get $T(2,n+2)\#T(2,3)$ makes $T^\beta_n$ into $P(1-n,-2,2)$.  (Right)  Filling $T^\beta_n$ with a rational tangle to get $T(2,1-n)\#T(2,2)$ makes $T^\alpha_n$ into $P(n+3,3,-2)$.     }
\label{fig:toroidal}
\end{figure}

Figure~\ref{fig:toroidal}(Middle) shows the results of filling $T^\alpha_n$ and $T^\beta_n$ with the rational tangle $\rho^\alpha$ defined by a pair of arcs in $S$ so that  $T^\alpha_n (\rho^\alpha)$ is composite.   We then observe that the filling $T^\beta_n(\rho^\alpha)$ is the pretzel link $P(1-n,2,2)$, and this is a two-bridge link if and only if $n =0,2$.   We have already omitted these values of $n$.

Figure~\ref{fig:toroidal}(Right) shows the results of filling  $T^\alpha_n$ and $T^\beta_n$ with the rational tangle $\rho^\beta$ defined by a pair of arcs in $S$ so that  $T^\beta_n (\rho^\beta)$ is composite.  We then observe that the filling $T^\alpha_n(\rho^\beta)$ is the pretzel link $P(n+2,3,-2)$, and this is a two-bridge link if and only if $n = -1,-3$.  We have already omitted these values of $n$ too.

Therefore, to conclude that the knot $K_n$ has tunnel number $2$, it is sufficient to require that $n \not \in \{-3,-2,-1,0,1,2,3\}$.
\end{proof}

\begin{lemma}\label{lem:homologous}
The  knot $K_n^*$ in $L(3n^2+n+1,-3n+2)$ that is surgery dual to $K_n$ is homologous to the knot $T_n^*$ that is surgery dual to the $(3n+1,n)$--torus knot in $S^3$.
\end{lemma}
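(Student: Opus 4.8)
The plan is to identify the homology class of each surgery dual inside $H_1\bigl(L(p,q)\bigr) \cong \Z/p\Z$, where $p = 3n^2+n+1$, and show the two classes agree up to the ambiguity inherent in such an identification (sign and the choice of generator). The key point is that the homology class of a surgery dual is governed entirely by the surgery slope together with the homology class of the original knot, and both $K_n$ and $T_n$ are null-homologous knots (in $\P$ and in $S^3$ respectively) undergoing integral $p$--surgery. For a null-homologous knot $K$ in a homology sphere, the core $K^*$ of the surgery solid torus generates $H_1$ of the surgered manifold, and its precise class is read off from the gluing.

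First I would set up the homological bookkeeping via the double branched cover picture of Figure~\ref{fig:P-235to2bridge}. The banding on $\kappa_n$ is described by the continued fraction data $[n,-1,-n,3] = -\tfrac{3n^2+n+1}{-3n+2}$, and the dual arc $\kappa_n^*$ lifts to $K_n^*$ in $L(p,q)$. I would track the class $[K_n^*] \in H_1(L(p,q)) \cong \Z/p\Z$ by following the dual arc through the sequence of isotopies recognizing the result as the two-bridge knot $B(p,-3n+2)$, reading off its class in the standard genus one Heegaard diagram of the lens space. On the other side, the $(3n+1,n)$--torus knot $T_n$ sits on the standard Heegaard torus of $S^3$, so its surgery dual $T_n^*$ is a simple (grid number one) knot whose homology class in $L(p,q)$ is computed directly from the torus slopes; for a torus knot $T(a,b)$ with $p = ab \pm 1$, the dual has a well-known class expressed in terms of $a,b$ modulo $p$.

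The main obstacle I anticipate is matching the two homology classes modulo the genuine indeterminacy in how $H_1(L(p,q))$ is identified with $\Z/p\Z$. Two simple knots $K(p,q,k)$ and $K(p,q,k')$ represent isotopic (homologous) knots precisely when $k' \equiv \pm k$ or $k' \equiv \pm q^{\pm 1} k \pmod p$, reflecting the symmetries of the lens space and the choice of orientation; so the computation must be carried out carefully enough to land in the correct orbit, not merely to produce two residues that happen to look different. I expect the cleanest route is to compute both classes as residues mod $p$ from their respective diagrams and then verify they coincide under one of these allowed identifications, rather than attempting an ad hoc geometric isotopy. A consistency check is available from the ambient context: since $p = 3n^2+n+1$ and $q = -3n+2$ satisfy the continued fraction relation above, the arithmetic identities linking $n$, $q$, and $p$ should force the two residues into the same orbit essentially automatically, so the real work is bookkeeping the orientation and generator conventions so that the final congruence is exact.
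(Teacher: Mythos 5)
Your overall framework is reasonable (both duals generate $H_1(L(p,q))\cong\Z/p\Z$, and ``homologous'' must be interpreted up to the identification of the two surgered manifolds), but the proposal has a genuine gap at its central step: you never supply a method for actually computing $[K_n^*]$. You propose to ``read off'' the class of $K_n^*$ from the standard genus one Heegaard diagram by following the dual arc $\kappa_n^*$ through the isotopies of Figure~\ref{fig:P-235to2bridge}. The problem is that $\kappa_n^*$ does \emph{not} lie in the bridge sphere of $B(3n^2+n+1,-3n+2)$; only arcs lying in the bridge sphere lift to knots sitting in the Heegaard torus, whose classes can be read off from the genus one diagram. For an arbitrary arc meeting the branch locus only in its endpoints, there is no ``reading off'': one must first put the arc into such a standard position, and the moves needed to do so (isotopy is not enough) are exactly where the content lies. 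This is what the paper's proof does: two clasp changes (plus isotopy) carry $\kappa_n^*$ to an arc $\tau_n^*$ in the bridge sphere; clasp changes of an arc about the branch locus lift to crossing changes of the knot in the double branched cover (Figure~\ref{fig:homology}), so $K_n^*$ is homotopic, hence homologous, to the lift $T_n^*$ of $\tau_n^*$. In other words, the geometric manipulation you hoped to bypass is the mechanism that makes any homology computation possible.

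A second, independent weak point is your ``consistency check'': the claim that the arithmetic identities relating $n$, $p$, $q$ should force the two residues into the same orbit ``essentially automatically.'' This is not automatic. The homological constraint available a priori (self-linking $-1/p \bmod 1$, as in the proof of Theorem~\ref{thm:largesurgery}) only pins down the class up to a quadratic congruence $k^2 \equiv q \pmod p$, and for composite $p$ such congruences can have solutions lying in distinct orbits of the units realized by homeomorphisms of $L(p,q)$; whether distinct knots with lens space surgeries can have homologous duals is precisely the kind of issue raised in Question~\ref{ques1}(2). So the comparison cannot be delegated to arithmetic; it needs the explicit computation you deferred. Note also that your proposal tacitly assumes that $p$--surgery on $T(3n+1,n)$ yields $L(p,-3n+2)$ with the correct orientation, whereas the paper \emph{derives} the identification of the torus knot rather than assuming it: a second banding along $\tau_n^*$ produces $B(n,-1)\#B(3n+1,3)$, so the $S^3$ knot dual to $T_n^*$ (obtained from the first banding, to the unknot) has a reducible surgery and must be the $(3n+1,n)$--torus knot by Moser's theorem. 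If you want to salvage your approach, the honest version of it essentially reconstructs the paper's: homotope the lifted knot into the Heegaard torus by controlled crossing changes, then identify what you get.
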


\begin{proof}
Starting from the lower left of Figure~\ref{fig:P-235to2bridge} where the dual arc $\kappa_n^*$ is a presented on a standard form of the two bridge link $B(3n^2+n+1,-3n+2)$, Figure~\ref{fig:homology} (Left) shows how two clasp changes (and isotopy) transforms $\kappa_n^*$ into an arc $\tau_n^*$ in a bridge sphere. Since $\tau_n^*$ lies in the bridge sphere, its lift to the double branched cover is a knot $T_n^*$ in the Heegaard torus of the lens space.   Lifting the transformation of Figure~\ref{fig:homology} (Left) to the double branched cover thus shows that $K_n^*$ and $T_n^*$ are related by two crossing changes.  (The clasp changes lift to crossing changes as indicated in Figure~\ref{fig:homology}(Right).)  Hence these knots are homotopic, and therefore homologous, in the lens space.

\begin{figure}
\centering
\includegraphics[width=5.5in]{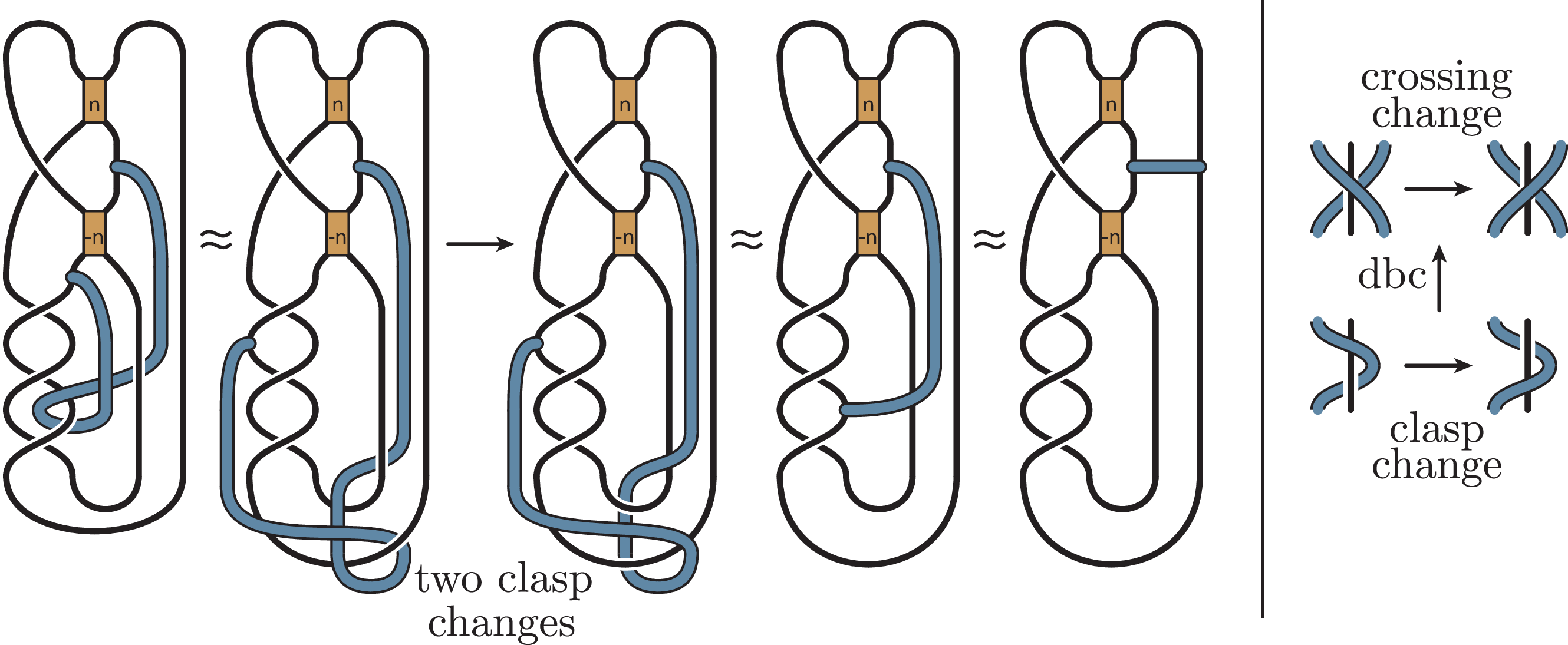}
\caption{(Left) An isotopy of the arc $\kappa_n^*$ on the two bridge link $B(3n^2+n+1,-3n+2)$ from the lower left of Figure~\ref{fig:P-235to2bridge} has two clasp changes performed.  The resulting arc $\tau_n^*$ is then isotoped to lie in a bridge sphere. (Right)  A clasp change of an arc around a segment of a branch locus lifts to a crossing change about a segment of the fixed set in the double branched cover.}
\label{fig:homology}
\end{figure}

\begin{figure}
\centering
\includegraphics[width=5.5in]{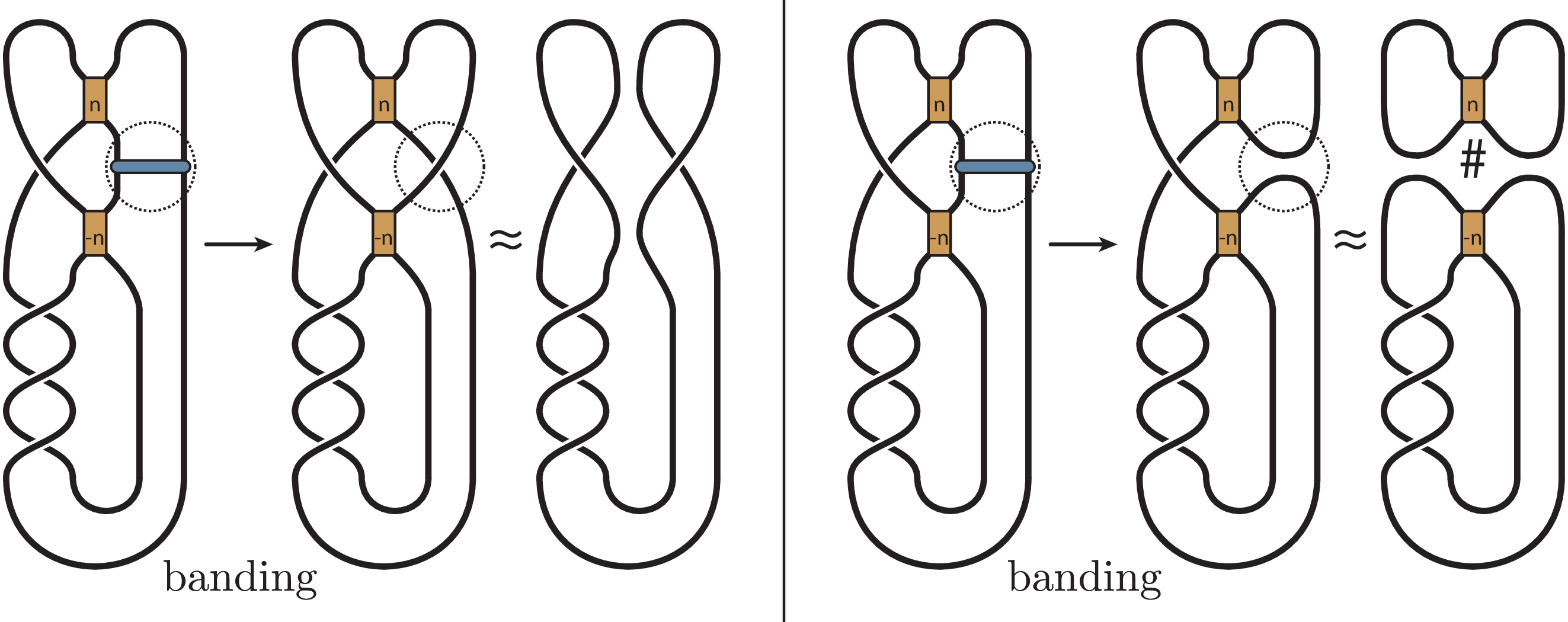}
\caption{(Left) A banding of $B(3n^2+n+1,-3n+2)$ along the arc $\tau_n^*$ produces the unknot. (Right)  A different banding of $B(3n^2+n+1,-3n+2)$ along the arc $\tau_n^*$ produces the connected sum of $B(n,-1)$ and $B(3n+1,3)$.}
\label{fig:torusknot}
\end{figure}

Finally,  Figure~\ref{fig:homology}(Left) shows a banding of $B(3n^2+n+1,-3n+2)$ along the arc $\tau_n^*$ to the unknot.  Thus we may identify $T_n^*$ as surgery dual to a torus knot $T_n$ in $S^3$.   Because Figure~\ref{fig:homology}(Left) shows a banding of $B(3n^2+n+1,-3n+2)$ along the arc $\tau_n^*$ to the connected sum $B(n,-1)\#B(3n+1,3)$, the torus knot $T_n$ has a reducible surgery to $L(n,-1)\#L(3n+1,3) \cong L(-n,3n+1) \# L(3n+1,-n)$ which is the mirror of $L(n,3n+1)\#L(3n+1,n)$.  Thus $T_n$ must be the $(3n+1,n)$--torus knot (for example see \cite{moser}).
\end{proof}

\begin{proof}[Proof of Theorem~\ref{thm:properties}]
(1) Theorem~\ref{thm:main} shows that either $\pm p$--surgery on $K_n$ is a lens space.   Tange shows that any integral lens space surgery on a knot in $\P$ must be positive \cite[Theorem 1.1]{tange-nonexistence}.

(2) In \cite[Theorem 3.1]{tange-nonexistence}, the paragraph following, and its proof, Tange further shows that if an L-space homology sphere $Y$ contains a knot $K$ with irreducible exterior for which a positive integer surgery produces an L-space, then $K$ is a fibered knot supporting a tight contact structure.  (Tange notes that the fiberedness of $K$ follows from the proof of \cite[Theorem 1.2]{OzSz-lens} and \cite{ni-fiberedknots}. Tange then demonstrates that the Heegaard Floer contact invariant of the contact structure supported by $K$ is non-zero.)

(5)  
If $T_n$ is  the $(3n+1,n)$--torus knot in $S^3$, then $p$--surgery on $T_n$ produces the lens space $L(p,q)$.   By Lemma~\ref{lem:homologous}, the surgery dual knot $T_n^*$ is homologous to the knot $K_n^*$ that is surgery dual to $K_n$.

(3) \& (6)
 Since the dual knots $K_n^*$ and $T_n^*$ are homologous, Theorem~\ref{thm:largesurgery} implies $2g(K^n) = p-1$ and the stated relationship of the Alexander polynomials of $K_n$ and $T_n$.

(4) Knowing that $g(K_n)=(p+1)/2$ implies $\Rk\HFK(L(p,q), K_n^*) = p+2$ \cite[Theorem 1.4]{hedden} (cf.\  \cite[Proposition 4.5]{rasmussen}).
\end{proof}

\begin{proof}[Proof of Theorem~\ref{thm:largesurgery}]
Let $K^*$ be the surgery dual to $K$ in $L(p,q)$ and (for some choice of orientations) let $J^*$ be the simple knot in $L(p,q)$ homologous to $K^*$.  Then $J^*$ has an integral surgery to an L-space homology sphere $Y_J$, \cite[Theorem 2]{rasmussen}.  

Since {\em positive} $p$--surgery on $K$ gives $L(p,q)$, then the self-linking number of $K^*$ is $-1/p \pmod{1}$, see \cite[Section 2]{rasmussen}.  This is demonstrated in Figure~\ref{fig:slopes}:  If $\Sigma$ is an oriented Seifert surface for $K$ giving the oriented longitude $\lambda = \bdry \Sigma$, then the positively linking meridian $\mu$ of $K$ is oriented as shown.  In order for $\lambda$ to run positively along $K^*$, we need $\mu^* \cdot \lambda >0$. Thus we must orient $\mu^*$ so that $[\mu^*] = [\lambda]+p[\mu]$.   This forces $\mu$ to be an anti-parallel longitude of $K^*$.  Hence we may use $-\mu$ to calculate the self-linking number of $K^*$ as $(-\mu \cdot \lambda)/p  \pmod{1}$.

With this set-up, $\mu$--surgery on $K^*$ may be regarded as $-1$--surgery on $K^*$.  This means $Y = K^*_{-1}$ in the notation of \cite{rasmussen}, though not explicitly stated.   Similarly we also have $Y_J = J^*_{-1}$.

\begin{figure}
\centering
\includegraphics[width=3.5in]{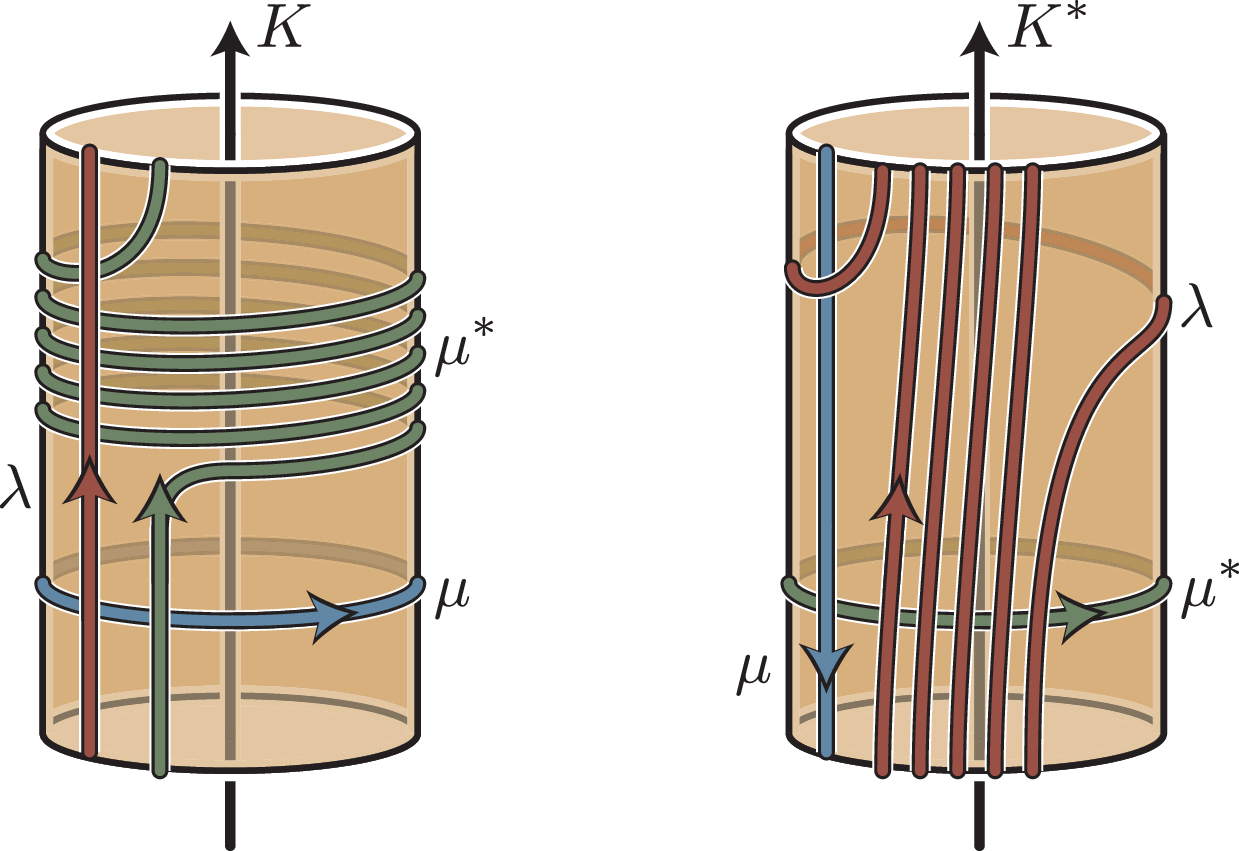}
\caption{(Left) A regular neighborhood of an oriented knot $K$ in a homology sphere $Y$ has meridian $\mu$ linking $K$ positively and longitude $\lambda$ oriented as the boundary of a Seifert surface.  The  surgery curve $\mu^*$ of positive slope $5/1$ shown here is oriented as $[\mu^*] = [\lambda] + 5 [\mu]$ in $H_1(\bdry \nbhd(K))$. (Right) Performing $\mu^*$--surgery on $K$ produces the rational homology sphere $Z$ and surgery dual knot $K^*$. Orienting $K^*$ to be linked positively by its meridian $\mu^*$, the boundary of the (rational) Seifert surface $\lambda$ is homologous to $5[K^*]$ in $\nbhd(K^*)$ while the longitude $\mu$ is homologous to $-[K^*]$.}
\label{fig:slopes}
\end{figure}

If $2g(K)-1 \neq p$, then the knots $J^*$ and $K^*$ have isomorphic knot Floer homology \cite[Theorem 2]{rasmussen} (cf.\ \cite[Theorem 1.4]{hedden}). 
Having isomorphic (and simple) knot Floer homology would imply that the L-space homology spheres $Y_J$ and $Y$ have the same $d$--invariants.  Hence $d(Y_J)=2$ too.  But now $Y_J$ cannot be $S^3$ since $d(S^3)=0$.

If $2g(K)-1=p$ then ${\rm width}\ \HFK(L(p,q),K^*) = 2p$ \cite{ni-linkfloer}, cf.\ \cite[Theorem 4.3]{rasmussen}. Thus $d(Y)=d(Y_J)+2$ by \cite[Proposition 5.4]{rasmussen} and so $d(Y_J)=0$. Greene's solution to the Lens Space Realization Problem \cite{greene} proceeds by first identifying the pairs (lens space, homology class)  that contain knots for which some integral surgery produces an L-space homology sphere with $d=0$, and then observing that each of these pairs contains the surgery dual to a Berge knot.  Since the lens space surgery duals to Berge knots are simple knots, we have that in fact $J^*$ is surgery dual to a Berge knot $B$ and $Y_J=S^3$.

\smallskip
For the statement about Alexander polynomials, 
let us first summarize work of Tange on Alexander polynomials of knots in L-space homology spheres for which a positive integral surgery yields a lens space \cite{tange-ozszcorrectionterm}.
The symmetrized Alexander polynomial of a knot $K$ in an L-space homology sphere with a non-trivial L-space surgery can be expressed as 
\[\Delta_K(t) = \sum_{i \in \Z} a_i(K) t^i = (-1)^k + \sum_{j=1}^k(-1)^{k-j}(t^{n_j}+t^{-n_j})  \in \Z[t^{\pm1}] \]
following the arguments of \cite{OzSz-lens}.   For a given positive integer $p$, pass to the quotient $\Z[t^{\pm1}]/(t^p-1)$ to obtain a polynomial 
$\tilde{\Delta}_K(t) = \sum_{i \in \Z/p\Z} \tilde{a}_i(K) t^i$ with coefficients $\tilde{a}_i(K) = \sum_{j = i \mod{p}} a_j(K)$.  Assuming that $p$--surgery gives a lens space $Z$, then $2g(K)-1 \leq p$ \cite{KMOS}. 
Note that $K$ is fibered%
\footnote{Since lens spaces are irreducible, a knot in a homology sphere with a lens space surgery must have irreducible exterior.  Thus $K$ has irreducible exterior and so it is fibered by \cite[Theorem 1.2]{OzSz-lens} and \cite{ni-fiberedknots}; cf.\ Theorem~\ref{thm:properties}(2).}
and hence the degree of $\Delta_K(t)$ equals $2g(K)$.
Because of this and that the coefficients of $\Delta_K(t)$ are $a_i(K) = 0$ or $\pm1$ (and the non-zero ones alternate in sign), it follows that  $\tilde{a}_i(K) = 0$, $\pm1$, or $2$ where  $\tilde{a}_i(K)=2$ implies that both $p$ is even and $i=p/2$ \cite[Corollary 2]{tange-ozszcorrectionterm}.  Furthermore, one may determine that the coefficients $\tilde{a}_i(K)$ and the polynomial $\tilde{\Delta}_K(t)$ only depend on the lens space $Z$ and the homology class of the surgery dual knot $K^*$ \cite{brody} (cf.\ \cite[\S5]{rasmussen}).  
In particular, if $p=2g(K)-1$ then 
\[\Delta_K(t) = \tilde{\Delta}_K(t) -(t^{(p-1)/2} +  t^{-(p-1)/2})+(t^{(p+1)/2} +  t^{-(p+1)/2})\]
where the indices for the coefficients of $\tilde{\Delta}_K(t)$ are taken to be the representatives of $\Z/p\Z$ from $-(p-1)/2$ and $(p-1)/2$.
See also the discussion preceding \cite[Proposition 3.3]{tangeknots}.

Now in our present situation, since the Berge knot $B$ in $S^3$ has $p$--surgery to $L(p,q)$ in which the surgery dual is the simple knot $J^*$, we therefore have $2g(B)<p$ because $p$ is odd \cite[Theorem 3.1]{hedden}\footnote{See also \cite[Theorem 1.4]{greene}.}.
  Hence $\tilde{a}_i(B) = a_i(B)$ for all $i \in \Z/p \Z$ and  $\Delta_{B}(t) = \tilde{\Delta}_{B}(t)$.
Because the surgery dual $K^*$  is homologous to the simple knot $B^*$ in $L(p,q)$,  $\tilde{\Delta}_{K}(t) = \tilde{\Delta}_{B}(t)$.   Therefore we have  
\[\Delta_{K}(t) = \Delta_{B}(t)  -(t^{(p-1)/2} +  t^{-(p-1)/2})+(t^{(p+1)/2} +  t^{-(p+1)/2})\]
as claimed.
\end{proof}

\begin{lemma}\label{lem:gofk}
There is a unique genus one fibered knot $J$ in $\P$.
As an open book, it supports the tight contact structure on $\P$.
Furthermore, it is surgery dual to the negative trefoil knot in $S^3$.
\end{lemma}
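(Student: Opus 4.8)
The plan is to translate the problem into the mapping class group of the once--punctured torus. A genus one fibered knot $J$ in a closed oriented $3$--manifold $Y$ is exactly the binding of an open book whose page is the once--punctured torus $T_1$ and whose monodromy $\phi$ fixes $\partial T_1$ pointwise: the exterior $Y\setminus\nbhd(J)$ is the mapping torus of $\phi$, and the pair $(Y,J)$ is recovered as the open book. Since the fiber of a fibered knot in an irreducible manifold is unique up to isotopy, the conjugacy class of $\phi$ in $\mathrm{Mod}(T_1,\partial T_1)\cong\widetilde{\SL_2(\Z)}$ (the central $\Z$--extension of $\SL_2(\Z)$ generated by the boundary twist $\tau_\partial=(\tau_a\tau_b)^6$) is a complete invariant of $J$ up to ambient homeomorphism. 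I would therefore reduce the lemma to: there is a single conjugacy class $[\phi]$ whose open book is orientation--preservingly $\P$, and I would identify its binding.

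\textbf{Existence, identification and tightness.} Using the standard family of open books $(T_1,(\tau_a\tau_b)^n)$, one computes that the $\SL_2(\Z)$--image of $(\tau_a\tau_b)^n$ has trace $1$ exactly when $n\equiv\pm1\pmod 6$, and in that case the open book is the Brieskorn sphere $\Sigma(2,3,n)$. Thus $n=5$ gives $\P=\Sigma(2,3,5)$ with monodromy $\psi_J=(\tau_a\tau_b)^5$, and since $(\tau_a\tau_b)^5=\tau_\partial(\tau_a\tau_b)^{-1}$ its binding $J$ is the surgery dual of the stated $\pm1$--surgery on the negative trefoil (the fibration of $\P\setminus\nbhd(J)$ being the inherited fibration of the trefoil exterior, whose fiber boundary meets the meridian of $J$ once). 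Because $\psi_J$ is a product of positive Dehn twists, the supported contact structure is Stein fillable, hence tight; as $\P$ carries a unique tight contact structure, $J$ supports it.

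\textbf{Uniqueness.} For a general monodromy $\phi$, the page boundary is null--homologous in the mapping torus, so the open book filling is along the suspension meridian and $H_1(Y_\phi)=\mathrm{coker}(\bar\phi-I)$, whence $|\,2-\mathrm{tr}\,\bar\phi\,|=1$ and $\mathrm{tr}\,\bar\phi\in\{1,3\}$. If $\mathrm{tr}\,\bar\phi=1$ then $\bar\phi$ is elliptic of order six and is conjugate into the cyclic group generated by $\overline{\tau_a\tau_b}$, so $\phi\sim(\tau_a\tau_b)^k$ with $k\equiv\pm1\pmod6$ and $Y_\phi=\Sigma(2,3,|k|)$ with orientation determined by the sign of $k$; since the $\Sigma(2,3,m)$ are pairwise distinct and $\P$ is chiral, only $k=5$ yields $\P$, giving the single class $\psi_J$. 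If $\mathrm{tr}\,\bar\phi=3$ then $\bar\phi$ is Anosov; the unique trace--three conjugacy class is that of the figure--eight monodromy $\tau_a\tau_b^{-1}$, so the mapping torus is the hyperbolic figure--eight exterior and $Y_\phi$ is a $1/m$--surgery on the figure--eight: this is $S^3$ for $m=0$, and otherwise has infinite fundamental group ($\Sigma(2,3,7)$ for $m=\pm1$, hyperbolic for $|m|\geq2$). As $\pi_1(\P)$ is finite and nontrivial, no Anosov monodromy produces $\P$. Hence only $\psi_J$ survives; and since $\mathrm{Mod}(\P)$ is trivial, the corresponding fibered knot is unique up to isotopy.

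\textbf{Main obstacle.} The delicate point is the Anosov case: one must know that the trace--three monodromies form a single conjugacy class (so the exterior is genuinely the figure--eight exterior) and that none of its fibered fillings is $\P$. The finiteness of $\pi_1(\P)$ is the essential input here, because it rules out all nontrivial figure--eight fillings at once; note the Casson invariant does \emph{not} separate $\Sigma(2,3,5)$ from $\Sigma(2,3,7)$, so a homological or Floer--theoretic count would not suffice. The remaining care lies in the orientation bookkeeping and in matching the central framing exponent $m$ to the slope of the fibered filling, which is what pins down both the correct sign of the surgery and the positivity of the factorization $\psi_J=(\tau_a\tau_b)^5$.
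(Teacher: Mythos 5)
Your proof is correct, and it takes a genuinely different route from the paper's. The paper argues downstairs, through double branched covers: since $\P$ has a unique genus two Heegaard splitting \cite{BoileauOtal}, the pretzel knot $P(-2,3,5)$ is the only $3$--bridge link whose double branched cover is $\P$; genus one fibered knots in $\P$ then correspond to $3$--braid axes of this link (the correspondence of \cite{baker-cgofkils}), and by \cite{BirmanMenasco} the $(3,5)$--torus knot has a unique braid axis, giving uniqueness up to homeomorphism. You argue upstairs, classifying once-punctured-torus open books of $\P$ via conjugacy classes in $\mathrm{Mod}(T_1,\partial T_1)\cong\widetilde{\SL_2(\Z)}$: the homology-sphere condition forces $\mathrm{tr}\,\bar\phi\in\{1,3\}$; trace one gives monodromies conjugate to $(\tau_a\tau_b)^k$ with $k\equiv\pm1\pmod 6$, i.e.\ the Brieskorn family $\pm\Sigma(2,3,|k|)$, of which only $k=5$ is $\P$; trace three gives the single Anosov class of the figure-eight monodromy (class number one for discriminant $5$), none of whose fillings has finite nontrivial fundamental group. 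Both proofs invoke the same two external facts at the end: triviality of the mapping class group of $\P$ \cite{BoileauOtal} to upgrade homeomorphism to isotopy, and Giroux's criterion for tightness from a positive factorization of the monodromy. In fact the two arguments are related by the hyperelliptic involution, under which $\mathrm{Mod}(T_1,\partial T_1)\cong B_3$ and your conjugacy analysis projects to exactly the braid-axis uniqueness that the paper quotes from \cite{BirmanMenasco}. The paper's route is shorter given the machinery it cites and stays within the branched-covering framework used throughout the article; your route is more self-contained (needing only $\SL_2(\Z)$ algebra and the standard $1/m$--surgery descriptions of trefoil and figure-eight fillings) and yields more, namely a classification of genus one fibered knots in all homology spheres. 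The two facts you flag as delicate --- uniqueness of the trace-three conjugacy class, and uniqueness of the fibration of the exterior (which holds because the exterior has first Betti number one, not because of irreducibility per se) --- are both true and standard, so there is no gap.
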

\begin{proof}

  This can be seen in the spirit of \cite{baker-cgofkils} which relates genus one fibered knot to axes of closed $3$--braids as follows: Noting that since $\P$ has a unique genus $2$ Heegaard splitting \cite{BoileauOtal},  $P(-2,3,5)$ is the only $3$--bridge link whose double branched cover gives $\P$. (In fact $\P$ is the double branched cover of no other link.)  Since $P(-2,3,5)$ is isotopic to the $(3,5)$--torus knot, its $3$--braid axis $A$ lifts to a genus one fibered knot $J \subset \P$.    By \cite[The Classification Theorem]{BirmanMenasco} and \cite[Lemma 3.8]{baker-cgofkils}, for instance, $A$ is the only $3$--braid axis for $P(-2,3,5)$ up to isotopy of unoriented links.  Hence $J$ is the only genus one fibered knot up to homeomorphism in $\P$.  Since the mapping class group of $\P$ is trivial \cite{BoileauOtal}, $J$ is the only genus one fibered knot in $\P$ up to isotopy.  

In the double branched cover, a braid presentation of the branch locus lifts to a Dehn twist presentation of the monodromy of the lift of the axis.
Since the axis $A$ presents $P(-2,3,5)$ as a positive braid (indeed, as the $(3,5)$-torus knot),  we obtain a presentation of the monodomy of $J$ as a product of positive Dehn twists.  Hence it supports the tight contact structure on $\P$ \cite{giroux}.

One may view $J$ as surgery dual to the negative trefoil in several ways.  Taking the route through branched covers, this is demonstrated in \cite[Proof 2]{baker-almostsimple}, though for the mirrored situation.
\end{proof}

\section{Hedden's almost-simple knots.}\label{sec:heddensknots}
In each lens space $L(p,q)$ with coprime $p>q>0$, Hedden diagramattically describes two unoriented $1$--bridge knots $T_L$ and $T_R$ ({\em the Hedden knots}) via the  doubly-pointed genus $1$ Heeggaard diagrams of $L(p,q)$ with $p+2$ intersection points \cite[Figure 3]{hedden}.  An alternative presentation of local pictures of these diagrams near the two marked points are shown in the top row of Figure~\ref{fig:almostsimplediagram}.   Since the {\em simple knots} in $L(p,q)$ are those that can be described via doubly-pointed genus $1$ Heeggaard diagrams of $L(p,q)$ with only $p$ intersection points, we like to regard the Hedden knots as {\em almost-simple}.  The bottom row of Figure~\ref{fig:almostsimplediagram} shows the same portion of the diagram for related simple knots.  These knots are notable because of the following theorem.

\begin{theorem}[{\cite[Theorem~3.1]{hedden}}]\label{thm:hedden3.1}
If $K^*$ is a $1$--bridge knot in $L(p,q)$ with an integral surgery to an L-space homology sphere, then either
\begin{itemize}
\item $p>2g-1$ and $K^*$ is a simple knot, or
\item $p=2g-1$ and $K^*$ is either $T_L$ or $T_R$.
\end{itemize}
where $g$ is the Seifert genus of the surgery dual knot.
\end{theorem}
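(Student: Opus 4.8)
The plan is to run a knot Floer homology computation and feed it into a combinatorial analysis of genus $1$ doubly-pointed Heegaard diagrams. Write $Y$ for the L-space homology sphere and $K \subset Y$ for the surgery dual of $K^*$, so that $K^*$ is dual to an integral surgery on $K$ producing $L(p,q)$ and $g = g(K)$. I would first pin down the total rank of $\HFK(L(p,q), K^*)$ using that $K$ is an L-space knot, and then read off from the $1$-bridge diagram that the only knots realizing the two possible ranks are the simple knots and the Hedden knots $T_L, T_R$.

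First I would record the structure of $\HFK(Y,K)$. Since $L(p,q)$ is an irreducible L-space, $K$ has irreducible exterior and an L-space surgery; hence $K$ is fibered \cite{ni-fiberedknots, OzSz-lens} and is an L-space knot, so each Alexander grading of $\HFK(Y,K)$ has rank $0$ or $1$, the support is symmetric and runs from $-g$ to $g$ with the extreme gradings of rank $1$, and the genus bound $2g-1 \le p$ holds \cite{KMOS}. This leaves exactly the two regimes $p > 2g-1$ and $p = 2g-1$ of the statement.

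Next I would determine the total rank of $\HFK(L(p,q), K^*)$ by feeding this staircase into the Ozsv\'ath--Szab\'o surgery formula for the integral surgery on $K$. Away from the threshold, when $p > 2g-1$, the mapping cone simplifies so that each $\mathrm{Spin}^c$ structure of $L(p,q)$ contributes rank $1$; thus $K^*$ has \emph{simple} knot Floer homology and $\rk \HFK(L(p,q), K^*) = \rk \HF(L(p,q)) = p$ \cite[Theorem 2]{rasmussen}. At the threshold $p = 2g-1$ the top of the staircase is not cancelled, the width of $\HFK(L(p,q),K^*)$ equals $2p$ \cite{ni-linkfloer}, and $\rk \HFK(L(p,q), K^*) = p+2$. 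Hence the Floer homology alone forces the rank to be $p$ when $p > 2g-1$ and $p+2$ when $p = 2g-1$.

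It remains to convert these rank statements into the geometric conclusions, and this is where the $1$-bridge hypothesis and the main difficulty enter. A $1$-bridge knot in $L(p,q)$ is presented by a doubly-pointed genus $1$ Heegaard diagram, and for such a diagram the thinness inherited from the L-space knot $K$ guarantees that a reduced diagram carries no differentials in the complex computing $\HFK(L(p,q), K^*)$, so its number of intersection points equals $\rk \HFK(L(p,q), K^*)$. The rank-$p$ case therefore produces a diagram with exactly $p$ intersection points, which is by definition a simple knot. The rank-$(p+2)$ case produces a diagram with exactly $p+2$ intersection points; matching the two local pictures at the basepoints against the computed Alexander gradings, as in \cite[Figure 3]{hedden}, then leaves precisely the two configurations $T_L$ and $T_R$, the left/right choice reflecting on which side of a basepoint the uncancelled generators sit. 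The main obstacle is exactly this last step: one must show that no other reduced genus $1$ doubly-pointed diagram realizes the rank-$(p+2)$ complex, which uses the full thinness of the L-space staircase to rule out diagrams whose extra intersection points would otherwise survive, together with the combinatorial rigidity of genus $1$ diagrams to pin the two survivors down as $T_L$ and $T_R$.
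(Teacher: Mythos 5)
A preliminary remark on the comparison itself: the paper does not prove this statement. It is Hedden's Theorem~3.1, imported by citation, and the paper's only contribution is the observation (following Hedden) that although \cite{hedden} states it for surgeries to $S^3$, the same proof applies to surgeries to any L-space homology sphere. So your proposal can only be measured against Hedden's own argument, and in outline you have reproduced its strategy: compute $\rk\HFK(L(p,q),K^*)$ from the L-space knot structure of the surgery dual ($p$ when $p>2g-1$, and $p+2$ when $p=2g-1$), then convert the rank into a statement about a genus~$1$ doubly-pointed diagram. The rank computation you give is correct and is exactly what this paper itself invokes elsewhere (\cite[Theorem 2]{rasmussen}, \cite{ni-linkfloer}, \cite[Theorem 1.4]{hedden}).

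The gap is in the conversion step, and it is not a small one --- it is the actual content of the theorem. Two problems. First, your claim that a reduced diagram carries no differentials ``by the thinness inherited from $K$'' misattributes the mechanism: thinness of $\HFK(Y,K)$ is not what kills differentials. What is needed is a diagrammatic argument about $(1,1)$ diagrams: bigons containing neither basepoint can be cancelled by isotopies fixing the basepoints, and for a genus~$1$ doubly-pointed diagram the differential on $\HFK$ counts exactly such bigons (in the universal cover), so after reduction the number of generators equals the rank. Without this, the identification of ``rank $p$'' with ``diagram with $p$ intersection points'' (the definition of a simple knot) is unjustified. Second, and more seriously, in the rank-$(p+2)$ case you assert that matching local pictures at the basepoints ``leaves precisely the two configurations $T_L$ and $T_R$,'' and you yourself flag this as the main obstacle --- but you never carry it out. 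Classifying the reduced genus~$1$ diagrams having exactly one excess cancelling pair of intersection points, and showing that the basepoints and the excess pair must be configured as in $T_L$ or $T_R$ (using the grading and width constraints coming from the threshold Floer homology), is the heart of Hedden's proof. As written, your proposal is a correct plan with the decisive combinatorial step left as an assertion, so it does not yet constitute a proof.
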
  
Though the statement of this theorem in \cite{hedden} is explicitly only for surgeries to $S^3$, Hedden notes that his proof applies for surgeries to any L-space homology sphere.

\begin{figure}
\centering
\includegraphics[width=3.5in]{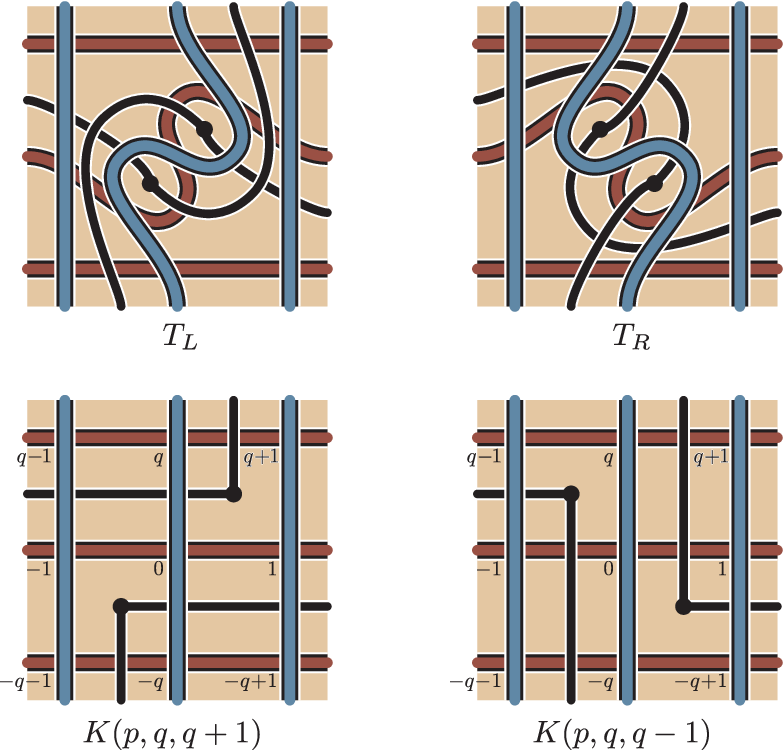}
\caption{Local portions of the doubly pointed diagrams for the almost-simple knots $T_L$ and $T_R$ and their associated simple knots $K(p,q,q+1)$ and $K(p,q,q-1)$; cf.\ \cite[Figure 1]{baker-almostsimple} and \cite[Figure 3]{hedden}.  In the botton two pictures, the intersection points of the red $\alpha$--curve and blue $\beta$--curve are numbered in order mod $p$ along the $\alpha$--curve.}
\label{fig:almostsimplediagram}
\end{figure}

Here we would like to clarify a few points about these almost-simple knots,  correct a couple of misstatements in \cite{baker-almostsimple}, and discuss the homology spheres that may be obtained by integral surgery on them following the arguments of \cite{baker-almostsimple}.  Afterwards, we summarize these results in Proposition~\ref{prop:almostsimple}.

\bigskip
The knots $T_L$ and $T_R$ are not homologous in general, contrary to what was stated in \cite{baker-almostsimple}.  One can observe this directly from the descriptions in Figure~\ref{fig:almostsimplediagram} (or \cite[Figure 3]{hedden}), noting that the magnitude of their algebraic intersection numbers with the blue $\beta$--curve differ by $2$.  One may further observe that the mirror of the knot $T_L$ in $L(p,q)$ is the knot $T_R$ in $L(p,-q)$. 

Furthermore, the proof in \cite{baker-almostsimple} that $T_L$ differs from (and thus is homologous to)  $K(p,q,q+1)$ by a crossing change also shows that $T_R$ differs from $K(p,q,q-1)$ by a crossing change.  Unfortunately, \cite[Figure~2]{baker-almostsimple} should be mirrored through the Heegaard torus so that the red $\alpha$--disks are below,  the blue $\beta$--disks are above, and the twist in $T_L$ is right-handed in order to be consistent with the orientation conventions.  In particular, the crossing change from $K(p,q,q+1)$ to $T_L$ is achieved by a $-1$--surgery on a loop $C_L$ in the Heegaard that encircles the two base-points in the diagram for $K(p,q,q+1)$ of bottom left Figure~\ref{fig:almostsimplediagram}.   The crossing change from $K(p,q,q-1)$ to $T_R$ may be similarly achieved by  a  $+1$--surgery on a loop $C_R$.

A knot homologous to $K(p,q,k)$ has an integral surgery to a homology sphere if and only if its self linking is $\pm1/p \pmod{1}$, or equivalently if and only if $k^2 = \pm q \pmod{p}$ \cite[Lemma 2.6]{rasmussen}, cf.\ \cite{rons}.  Here the choice of sign of $\pm$ is consistent and agrees with whether this integral surgery is a $\pm1$--surgery; cf.\  Figure~\ref{fig:slopes} and the discussion in the proof of Theorem~\ref{thm:largesurgery}.

If $T_L$ in $L(p,q)$ is dual to positive $p$--surgery on a knot $K$ in a homology sphere $Y$, then so is $K(p,q,q+1)$ and hence $(q+1)^2 = -q \pmod{p}$.  Making the substitution $-k = q+1$ gives the equation $k^2+k+1 = 0 \pmod{p}$. Thus $K(p,q,q+1)$ is dual to a type VII Berge knot $B$, one that lies in the fiber of a trefoil knot; see \cite[Section 6.2]{rasmussen} and \cite[Section 1.2]{greene}.  Since its fiber positively frames $B$, this trefoil knot is the {\em negative} trefoil (not the positive trefoil
as stated in \cite{baker-almostsimple}), and it can be identified with $C_L$ under the $S^3$ surgery on $K(p,q,q+1)$. Performing $-1$--surgery on the negative trefoil $C_L$ in $S^3$ produces $\P$ and takes $B \subset S^3$ to $K \subset \P$.  Since the linking of $B$ and $C_L$ is $0$, the positive $p$--surgery on $B$ becomes the positive $p$--surgery on $K$.   Consequently, if $-1$--surgery on $T_L$ is a homology sphere, it is $\P$. 

Similarly, if $T_R$ in $L(p,q)$ is dual to positive $p$--surgery on a knot $K'$ in a homology sphere, then $(q-1)^2 = -q \pmod{p}$.  Making the substitution $k = q-1$ gives the equation $k^2-k-1 = 0 \pmod{p}$.  Thus $K(p,q,q-1)$ is dual to a type VIII Berge knot $B'$, one that lies in the fiber of the figure eight knot.  This figure eight knot may be identified with $C_R$.  Performing $+1$--surgery on $C_R$ in $S^3$ produces the Brieskorn sphere $\Sigma(2,3,7)$ taking the knot $B'$ to the knot $K'$.  Consequently, if $-1$--surgery on $T_R$ is a homology sphere, it is $\Sigma(2,3,7)$.

This coincides with the difference between the $\tau$--invariants of these knots as noted by Rasmussen in the last two paragraphs of \cite[Section 5]{rasmussen}:  $\tau(T_L, \mathfrak{s}_0) =-1$ and so $\tau(T_R, \mathfrak{s}_0) =+1$.
Rasmussen further shows that if integral surgery on $T_L$ or $T_R$ produces a homology sphere, then it is an L-space homology sphere if and only if the surgery is a $-1$--surgery on $T_L$ or a $+1$--surgery on $T_R$ \cite[Proposition 4.5]{rasmussen}. 

In summary, the above discussion shows:
\begin{proposition}\label{prop:almostsimple} \
\begin{enumerate}
\item In $L(p,q)$, $T_L$ is homologous to the simple knot $K(p,q,q+1)$ and $T_R$ is homologous to the simple knot $K(p,q,q-1)$.

\item The mirror of $(L(p,q),T_L)$ is $(L(p,-q),T_R)$.

\item
If $-1$--surgery on $T_L$ is a homology sphere, then it is $\P = \Sigma(2,3,5)$ and $K(p,q,q+1)$ is positive surgery dual to a type VII Berge knot.

\item
If $-1$--surgery on $T_R$ is a homology sphere, then it is $\Sigma(2,3,7)$ and 
$K(p,q,q-1)$ is positive surgery dual to a type VIII Berge knot.
\end{enumerate}
\end{proposition}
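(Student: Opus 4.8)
The plan is to organize the preceding diagrammatic and arithmetic observations into a proof, handling the homological claims (1) and (2) first and then the surgery identifications (3) and (4). For (1), the key input is that a crossing change is a homotopy and therefore preserves the homology class of a knot. I would read off from the local pictures of Figure~\ref{fig:almostsimplediagram} that the doubly-pointed diagrams for $T_L$ and $K(p,q,q+1)$ coincide outside a neighborhood of the two marked points and differ there by a single full twist; realizing this twist as $-1$--surgery on a loop $C_L$ encircling the basepoints exhibits $T_L$ and $K(p,q,q+1)$ as differing by a crossing change, hence homotopic and homologous. The statement for $T_R$ and $K(p,q,q-1)$ is identical, with the twist of opposite handedness realized by $+1$--surgery on a loop $C_R$. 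For (2), I would reflect the genus one Heegaard diagram through the Heegaard torus: this sends $L(p,q)$ to $L(p,-q)$ and reverses the handedness of the local twist region, thereby interchanging the pictures defining $T_L$ and $T_R$ and giving the claimed identification of mirrors.

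For (3) and (4) the strategy is to couple the homology-sphere criterion with the identification of the surgery curve from (1). By \cite[Lemma 2.6]{rasmussen}, a knot homologous to $K(p,q,k)$ admits an integral homology sphere surgery precisely when $k^2 \equiv \pm q \pmod p$, the sign recording whether that surgery is a $\pm1$--surgery. Taking $T_L$ to be dual to positive $p$--surgery forces the $-$ sign and, with $k=q+1$, the congruence $(q+1)^2 \equiv -q \pmod p$; the substitution $-k = q+1$ rewrites this as $k^2 + k + 1 \equiv 0 \pmod p$, which is the defining condition for a type VII Berge knot $B$ lying in the fiber of a trefoil. I would then identify the loop $C_L$ from (1) with this trefoil, note that since the fiber positively frames $B$ the trefoil is \emph{negative}, and observe that the single $-1$--surgery on $C_L$ simultaneously effects the crossing change of (1) and converts $S^3$ into $\P=\Sigma(2,3,5)$, carrying $B$ to $K$; as $\lk(B,C_L)=0$, the positive $p$--surgery is preserved. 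Part (4) runs in parallel: with $k = q-1$ the congruence becomes $k^2 - k - 1 \equiv 0 \pmod p$, identifying a type VIII Berge knot in the fiber of the figure eight knot $C_R$, whose $+1$--surgery produces $\Sigma(2,3,7)$.

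The main obstacle I anticipate is fixing all the orientation and handedness conventions consistently — in particular establishing that $C_L$ is the \emph{negative} (rather than positive) trefoil and that the relevant crossing-change surgeries are $-1$ on $C_L$ and $+1$ on $C_R$. These are exactly the points at which \cite{baker-almostsimple} is to be corrected, so I would verify them against two independent checks: the $\tau$--invariant computation giving $\tau(T_L,\mathfrak{s}_0)=-1$ and $\tau(T_R,\mathfrak{s}_0)=+1$, and \cite[Proposition 4.5]{rasmussen}, which specifies exactly which integral surgery on $T_L$ or $T_R$ yields an L-space homology sphere.
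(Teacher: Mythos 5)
Your proposal is correct and follows essentially the same route as the paper: the crossing-change argument via $\pm1$--surgery on the loops $C_L$, $C_R$ for the homology statements, the mirror-through-the-Heegaard-torus observation for (2), and Rasmussen's Lemma 2.6 congruences $(q\pm1)^2 \equiv -q \pmod p$ leading to the type VII/VIII Berge knot identifications, with $C_L$ the negative trefoil and $C_R$ the figure eight knot whose $\mp1$--surgeries produce $\P$ and $\Sigma(2,3,7)$ respectively. Even your consistency checks (the $\tau$--invariant values and \cite[Proposition 4.5]{rasmussen}) mirror the closing remarks of the paper's discussion.
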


\section{Acknowledgments}
This article has benefited from discussions with Joshua Greene and Tye Lidman. Parts of this work and writing were done during visits to the University of Texas and Boston College in the winter of 2015; we thank them for their hospitality.

KLB was partially supported by a grant from the Simons Foundation (\#209184 to Kenneth L.\ Baker).

\medskip
\myrule{open diamond}{open diamond}
\medskip

\appendix\def\leftmark{Neil R.\ Hoffman}
\def\rightmark{The Poincar\'e homology sphere, lens space surgeries (appendix)}
\section{Symmetries of knots with lens space and Poincar\'e homology sphere surgeries} 
\begin{center}
{\sc Neil R.\ Hoffman}
\end{center}

The aim of this appendix is to compute the symmetry group of a hyperbolic knot in a lens space with a surgery to the Poincar\'{e} homology sphere $\P$.
Specifically, it will provide a proof of the following theorem:

\begin{theorem}\label{thm:symm}
Let $X$ be a one-cusped hyperbolic manifold admitting both a Poincar\'{e} homology sphere filling and a lens space filling. Then the symmetry group of $X$ is trivial or generated by a single strong involution.
\end{theorem}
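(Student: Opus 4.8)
The plan is to realize $\Isom(M)$, which is finite by Mostow--Prasad rigidity, as a group of symmetries of a knot in $\P$ and then to constrain it using the homology of the two distinguished fillings together with the chirality of $\P$. Let $\alpha$ be the slope with $M(\alpha)\cong\P$. Since $\P$ is a homology sphere, $H_1(M)\cong\Z$ is generated by $\alpha$, so $M$ is the exterior of a knot $K\subset\P$ with meridian $\alpha$; its Seifert longitude $\lambda$ is the rational longitude of $M$, and $\{\alpha,\lambda\}$ is a basis of $H_1(\bdry M)$. The lens-space slope is a further exceptional slope. The key structural input is that the rational longitude is a homeomorphism invariant, so every symmetry sends $\lambda$ to $\pm\lambda$.

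In this basis any symmetry $\phi$ acts on $H_1(M)\cong\Z$ by a sign $\epsilon\in\{\pm1\}$ and sends $\lambda\mapsto\delta\lambda$ with $\delta\in\{\pm1\}$, so $\phi_*$ on $H_1(\bdry M)$ is lower triangular of the form $\mat{\epsilon}{0}{c}{\delta}$. Because $\Isom(M)$ is finite, $\phi_*$ has finite order, which forces $c=0$ whenever $\epsilon=\delta$. Hence the orientation-preserving symmetries (those with $\det\phi_*=+1$, i.e.\ $\epsilon=\delta$) act on the cusp as $\pm I$; the nontrivial one is the elliptic involution, which preserves $\alpha$ and therefore extends over the filling solid torus to an orientation-preserving involution of $\P$ that reverses $K$ and fixes two of its points, i.e.\ a strong inversion. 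Thus the orientation-preserving subgroup injects into $\Z/2$, generated by this strong inversion. One technical point to settle here is that no nontrivial symmetry can act as the identity on $H_1(\bdry M)$ (which would force it to be a translation on the cusp torus); I would rule this out using that $\P$ has finite fundamental group and, by Boileau--Otal \cite{BoileauOtal}, trivial mapping class group.

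It remains to exclude orientation-reversing symmetries, and here the chirality of $\P$ is decisive: by \cite{BoileauOtal} the mapping class group of $\P$ is trivial, so $\P$ admits no orientation-reversing self-homeomorphism. An orientation-reversing symmetry $\psi$ has $\phi_*=\mat{\pm1}{0}{c}{\mp1}$. If $c=0$ then $\psi$ preserves the slope $\alpha$ and extends to an orientation-reversing self-homeomorphism of $\P$, a contradiction. So I would be reduced to the case $c\neq0$, where $\psi$ carries $\alpha$ to a distinct homology-sphere slope $\alpha'$ with $M(\alpha')\cong -\P$; that is, $M$ would admit a filling homeomorphic to the mirror $-\P$.

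The main obstacle is therefore to prove that $M$ has no $-\P$ filling, equivalently that no nontrivial $1/k$-surgery on $K\subset\P$ produces $-\P$. I expect this to be the crux of the argument, and I would attack it in one of two ways. The first uses Tange's theorem that an integral lens-space surgery on a knot in $\P$ is always positive \cite[Theorem 1.1]{tange-nonexistence}: reading the lens-space filling of $M$ through the hypothetical identification of $M$ with the exterior of a knot in $-\P$ would make that surgery negative, and I would derive a contradiction by tracking the orientation conventions through $\psi$. The second, orientation-sensitive route is via Heegaard Floer correction terms, using $d(\P)=2$ and $d(-\P)=-2$ together with the surgery inequalities for $1/k$-fillings to show that the value $-2$ is unattainable. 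Once a $-\P$ filling is excluded, no orientation-reversing symmetry survives, and the symmetry group is either trivial or the single strong inversion produced above.
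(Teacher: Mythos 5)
Your setup is fine as far as it goes: identifying $M$ with a knot exterior in $\P$, noting every symmetry sends the rational longitude to $\pm\lambda$, and deducing that orientation-preserving symmetries act on $H_1(\bdry M)$ as $\pm I$ (with $-I$ giving a strong inversion) parallels the paper's use of the Neumann--Reid structure theorem. But the theorem then hinges entirely on the ``technical point'' you defer: showing that no nontrivial symmetry acts as the identity on $H_1(\bdry M)$. The justification you propose --- finiteness of $\pi_1(\P)$ plus triviality of the mapping class group of $\P$ \cite{BoileauOtal} --- does not work. Such a symmetry preserves the meridian slope $\alpha$, hence extends to a self-diffeomorphism of the pair $(\P,K)$; Boileau--Otal only says this extension is isotopic to $\mathrm{id}_\P$, and that isotopy need not preserve $K$, so nothing forces the symmetry of $M$ to be trivial. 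Indeed, the identical reasoning applied to $S^3$ (which also has trivial mapping class group and ``finite'' fundamental group) would show that no knot in $S^3$ admits a free or periodic cyclic symmetry, which is false. These translation-type symmetries are exactly the cyclic group $Z(M,K,-)$ of the appendix, and killing it is where the actual proof does almost all of its work: it needs the Boileau--Boyer--Cebanu--Walsh classification of cyclic quotients \cite{BBCW2012}, Dunbar's classification of elliptic orbifolds cyclically covered by $\P$ (Lemma~\ref{lem:dunbar}), the drilling Lemma~\ref{lem:drilling}, the Boyer--Zhang distance bound for finite fillings \cite{boyer1996finite}, hyperbolicity of geodesic complements, and the Cyclic Surgery Theorem \cite{CGLS} (Lemma~\ref{lem:noFreeAction} and the final lemma). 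Your proposal contains neither this machinery nor any substitute for it, so the central case is genuinely open in your argument.

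There is also a secondary problem in your treatment of orientation-reversing symmetries. The $c=0$ case via chirality of $\P$ is fine, but in the $c\neq 0$ case your Route 1 appeals to Tange's positivity theorem \cite[Theorem 1.1]{tange-nonexistence}, which concerns \emph{integral} lens space surgeries; you have not shown the lens space slope of $M$ is distance one from $\alpha$, and here CGLS cannot force integrality (as it does for knots in $S^3$) because the meridional filling $\P$ is not a cyclic filling. Route 2 via $d$-invariants is left unexecuted. The paper's Proposition~\ref{prop:noReverse} sidesteps $\pm\P$ fillings entirely: an orientation-reversing symmetry carries the lens space slope $s=p\mu+q'\lambda$ to a slope at distance $2p|q'|\geq 2$ whose filling is again a lens space, and two cyclic fillings at distance at least $2$ on a hyperbolic manifold contradict the Cyclic Surgery Theorem directly. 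That argument is both complete and simpler, and you should use it (or fully carry out the correction-term computation you sketch).
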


Our method blends the classification of spherical orbifolds with results involving exceptional Dehn fillings. We refer the reader to \cite{BBCW2012} for the most closely related paper on this topic and Thurston's notes \cite[Chapter 13]{thurston1979geometry} for a more general introduction to orbifolds. 
In preparation, we first extend the notation of section~\ref{sec:notation} to address orbifolds and orbifold fillings and then discuss symmetries and quotients of hyperbolic manifolds.
Thereafter, the argument in this appendix will gradually ``whittle-down'' the symmetry group of the exterior $X$ of a hyperbolic knot $K$ in a lens space with a Dehn surgery to $\P$. First, we give an argument which eliminates orientation reversing symmetries so that $Sym(X) = Sym^+(X)$.  Then we proceed to analyze a subgroup $Z(X)$ of  $Sym^+(X)$, showing (a) that it has index at most $2$, (b) if the index is $2$ then $Sym(X)$ contains a strong involution, and ultimately (c) that $Z(X)$ is trivial.  This is pulled together in the proof of Theorem~\ref{thm:symm} at the end of this appendix.

\subsection{Background on orbifolds and symmetries.}\label{sec:Abackground}
Let $Q$ be a $3$--dimensional manifold or orbifold with {\em singular set} $\Sigma(Q)$, the set of points fixed by some non-trivial element of $\pi_1^{orb}(Q)$.  
(So $Q$ is a manifold if $\Sigma(Q)=\emptyset$.) If $K$ is a knot in $Q$, then its complement is $Q-K$ while its exterior is $X=Q-\nbhd(K)$.  We further assume that  $K$ is either  a component of $\Sigma(Q)$ or disjoint from $\Sigma(Q)$.

The torus $\bdry \nbhd(K)$ represents the \emph{cusp} of $Q-K$ corresponding to $K$.  In such a torus we consider essential closed curves  up to free homotopy.  A \emph{primitive curve} is homotopic to an essential simple closed curve, and hence is a slope.  Any non-primitive essential curve is a multiple of a primitive curve.  
Given two closed curves $\alpha$ and $\beta$ in the torus we say the \emph{distance} between $\alpha$ and $\beta$, $\Delta(\alpha, \beta)$ is the minimal (unoriented) geometric intersection number between the two curves.
 If $\alpha$ is a curve in $\bdry \nbhd(K)$ that is an $n$--fold multiple of a slope $\gamma$, then the result of $\alpha$--Dehn surgery on $K \subset Q$ is the orbifold  $X(\alpha)$ with underlying manifold $|X(\alpha)|$ obtained as the $\gamma$--Dehn surgery on $K \subset |Q|$  and a new singular set  $\Sigma(X(\alpha))$ consisting of $\Sigma(X)$ and the core of the attached $S^1 \times D^2$ with order $n$ (unless $n=1$ in which case the singular set remains only $\Sigma(X)$).  The orbifold attached to $X$ is a {\em orbi-solid torus} of order $n$.  An orbi-solid torus of order $1$ is just a solid torus.
 
 Following \cite[\S3]{BBCW2012}, an {\em orbi-lens space} is the quotient of $S^3$ by a finite cyclic subgroup of $SO(4)$ and is consequentially a union of two orbi-solid tori with coprime orders along their common boundary.  Its singular set is the union of the cores of these orbi-solid tori that have order at least $2$.   

A knot $K$ in a closed $3$--manifold $M$ is {\em hyperbolic} if its complement $M-K$ is a hyperbolic $3$--manifold with a single torus cusp.  We also say the exterior $X = M-\nbhd(K)$ of such a knot is {\em hyperbolic} if the interior of $X$ is hyperbolic.  For a hyperbolic knot $K$ with exterior $X$, the symmetry group $Sym(X)$ of homeomorphisms modulo isotopy is identified with the isometry group of the interior of $X$.  
The orientation preserving symmetry group is denoted $Sym^+(X)$.

Within $Sym^+(X)$ is the subgroup $Z(X)$, the maximal subgroup that acts freely on $\bdry X$.  In particular, $Z(X)$ is the maximal subgroup of $Sym^+(X)$ for which the quotient orbifold (or manifold) $X/Z(X)$ has a torus cusp.

Since $X$ is hyperbolic, the quotient $X/Sym^+(X)$ is an orientable cusped hyperbolic $3$--orbifold with $vol(X/Sym^+(X)) = \vol(X)/|Sym^+(X)|$.  Since  the minimal volume of an orientable cusped hyperbolic $3$--orbifold is positive (e.g.\ \cite{meyerhoff1986sphere}) and $X$ has finite volume, it follows that $Sym^+(X)$ is a finite group.

In a knot exterior $X$, the {\em homological longitude} is the slope $\lambda$ in $\bdry X$ so that $[\lambda]=0$ in $H_1(X; \Q)$.  This slope is unique up to sign as it is a generator of the kernel of the map $i_* \colon H_1(\bdry X; \Q) \to H_1(X; \Q)$ which has rank $1$ by the Half-Lives, Half-Dies Lemma.  Since it is unique, elements of $Sym(X)$ must preserve the slope $\lambda$ up to sign.
Elements of $Sym^+(X)$ of order $2$ that reverse the orientation of the homological longitude $\lambda$ of the manifold $X$ are called {\em strong involutions}.

If a group $G$ acts on a hyperbolic knot exterior  $X$, the cusp of the quotient orbifold $X/G$ records important information about the action of $G$ on the cusp of $X$. If the quotient $X/G$ has a torus cusp, then $G$ acts freely on  $\bdry X$. Furthermore, each element of $G$ restricts to a translation on $\bdry X$.  If $X/G$ has an $S^2(2,2,2,2)$ cusp,  then $G$ contains an order 2 element which fixes a discrete set of points on $\bdry X$.  In particular, if $X/Sym^+(X)$ has an $S^2(2,2,2,2)$ boundary, then  such an  element of $Sym^+(X)$ must preserve isotopy classes of slopes in $\bdry X$ but reverse their orientations.  Hence $Sym^+(X)$ contains a strong involution.  Furthermore, this means that there is a degree $2$ cover with a torus cusp, and hence $Z(X)$ has index $2$ in $Sym^+(X)$.

\subsection{Whittling down the symmetry group}

\begin{proposition}\label{prop:noReverse}
Let $X$ be the exterior of a knot in a lens space $L(p,q)$ with $p\geq 2$
 admitting a Dehn surgery to an integral homology sphere.
Then either $X$ is Seifert fibered or $Sym(X) = Sym^+(X)$.
\end{proposition}

%
%
\begin{proof}
Assume $X$ admits an orientation reversing symmetry $\tau$, and let 
$\lambda$ be the homological longitude of $X$. 
Then $\tau(\lambda)$ is isotopic to $\pm \lambda$ in $\bdry X$.
Since $\tau$ restricts to a homeomorphism on the boundary torus $\bdry X$, there must be a slope $\delta$ with $\Delta(\delta,\lambda)=1$ such that $\tau(\delta) = \pm \delta$.  Because this restriction of $\tau$ to $\bdry X$ must also be orientation reversing, $\tau$ reverses the orientation on exactly one of $\delta$ and $\lambda$.

Let $\mu$ be the slope in $\bdry X$ such that $X(\mu) = L(p,q)$.   
Since $X$ has a filling producing an integer homology sphere, $X$ must be a homology solid torus.  
Thus, $\mu= p \delta + q'\lambda$ in $H_1(\bdry X)$ for some integer $q'$ coprime to $p$ and $p\geq 2$.   Since $\tau(\mu) = \pm(p\delta-q'\lambda)$ and $q' \neq 0$, we have that $\Delta(\mu, \tau(\mu)) = |2pq'|>2$. 

 The symmetry $\tau$ implies  $X(\mu) \cong X(\tau(\mu))$ so that $\pi_1(X(\mu))$ and $\pi_1(X(\tau(\mu)))$ are both cyclic.   Thus, since $\Delta(\mu, \tau(\mu)) > 1$,  $X$ must be Seifert fibered by the Cyclic Surgery Theorem \cite{CGLS}. 
\end{proof}

We now further classify $Z(X)$.

\begin{lemma}\label{lem:coverandquotientnew}
Let $X$ be the exterior of a hyperbolic knot $K$ in a lens space $L(p,q)$ with $p\geq 2$ admitting a Dehn surgery to an integral homology sphere.
Then the following hold:
\begin{enumerate}
\item $Z(X)$ is cyclic.
\item $[Sym^+(X):Z(X)] \leq 2$
\item If  $[Sym^+(X):Z(X)] = 2$, then $X$ has a symmetry which is a strong involution.
\item In the quotient orbifold $\mathcal{Z} = X/Z(X)$, the meridian $\mu$ of $K$ maps to a primitive curve $\bar{\mu}$ in $\bdry\mathcal{Z}$.
\item  The Dehn filling $\mathcal{Z}(\bar{\mu})$ is an orbi-lens space in which the core $\bar{K}$ of the filling is disjoint from the singular set of $\mathcal{Z}(\bar{\mu})$.
\end{enumerate}
\end{lemma}

\begin{proof}
We first utilize the fact that $X$ is the exterior of a hyperbolic knot in an integral homology sphere to
show that the boundary of $X/Sym^+(X)$ is either a torus or $S^2(2,2,2,2)$.

Since $X$ admits an integer homology sphere filling, $X$ is a homology solid torus.  Furthermore, once oriented, the slope $\gamma$ in $\bdry X$ of this filling generates $H_1(X)$.  Thus together with a choice of orientation on the homological longitude $\lambda$ it forms a basis $(\gamma, \lambda)$ for $H_1(\bdry X)$.

Now consider the action of $Sym^+(X)$ on $\bdry X$.  Since the homological longitude is the unique slope in $\bdry X$ that is null homologous in $H_1(X;\Q)$, 
 any element of $Sym^+(X)$ must send $\lambda$ to $\pm \lambda$.  Since $Sym^+(X)$ is finite because $X$ is hyperbolic, consideration of the action on $H_1(\bdry X;\Q)$ shows that any element of $Sym^+(X)$ must send $\gamma$ to $\pm \gamma$. 
Then, because $Sym^+(X)$ is orientation preserving on $\bdry X$ too, any element of $Sym^+(X)$ must send the basis $(\gamma, \lambda)$ to $\pm(\gamma, \lambda)$.  Hence all slopes in $\bdry X$ are preserved by the action of $Sym^+(X)$. Consequently, the restriction to $\bdry X$ of a symmetry of $X$ either is trivial, has no fixed point (and hence is a translation), or  has order $2$.   
Therefore, by consideration of the Euclidean orbifolds which are orientable quotients of the torus, the boundary of $X/Sym^+(X)$ is either a torus or $S^2(2,2,2,2)$. 

\medskip
 
 We now further refine this analysis by incorporating the lens space filling.
Since its exterior $X$ is a homology solid torus, the knot $K$ in $L(p,q)$ represents a generator of $H_1(L(p,q),\Z)$.  Hence it lifts to a knot $\widetilde{K} \subset S^3$
 in the $p$--fold cyclic cover $S^3 \to L(p,q)$.
This cover restricts to a $p$--fold cyclic cover of knot exteriors $\X \to X$ in which the meridian $\mu \subset \bdry X$ of $K$ lifts to $p$ disjoint copies of the meridian of $\widetilde{K}$.  (Here $\X$ is the exterior of $\widetilde{K}$.)  The deck transformation group $H$ of this cover is therefore isomorphic to the cyclic group $\Z/p\Z$ and extends to an action on $S^3$ that is free on $\bdry \X$.   Hence $H$ is a subgroup of $Z(\X)$.

Consider the composition $\alpha \colon \X \to X/Sym^+(X)$ of the covers $\X \to X$ and $X \to X/Sym^+(X)$ with deck transformation groups $H$ and $Sym^+(X)$ respectively.  Let $G$ be the deck transformation group of $\alpha$.
If $\alpha$ is regular, then $G$ is a subgroup of $Sym^+(\X)$ and $H$ must be a normal subgroup of $G$ with $Sym^+(X) = G/H$.
 As the boundary of $X/Sym^+(X)$ is either a torus or $S^2(2,2,2,2)$, we claim that $\alpha$ is indeed a regular cover.
If this boundary is a torus, then this follows directly from \cite[Lemma 4]{Reid1991}.  However if this boundary is $S^2(2,2,2,2)$, then the argument of \cite[Lemma 4]{Reid1991} extends to also show that the covering $\alpha \colon \X \to X/Sym^+(X)$ is a regular cover as noted in the proof of \cite[Proposition 9.1]{neumann1992arithmetic}.  (Ultimately, this is because conjugation by an order $2$ element of the peripheral subgroup $\pi_1(S^2(2,2,2,2))$ inverts elements of the maximal abelian subgroup.  This maximal abelian subgroup contains the image of the peripheral subgroup of $\X$.)

Because $\X$ is the exterior of a hyperbolic knot in $S^3$, we know that $Sym^+(\X)$ must be cyclic or dihedral \cite[top of page 627]{BBCW2012} (see also  \cite[Proposition 9.1]{neumann1992arithmetic}). 
More specifically $Z(\X)$ is cyclic, $[Sym^+(\X):Z(\X)] \leq 2$, and $[Sym^+(\X):Z(\X)] = 2$ only if $Sym^+(\X)$ contains a strong involution.

Since $G$ is a subgroup of $Sym^+(\X)$, either it is  a subgroup of $Z(\X)$ and hence cyclic and acting freely on $\bdry \X$ itself, or it contains a strong involution.  In the former case, $Sym^+(X) = G/H$ is cyclic as it is the quotient of a cyclic group $G$ and  $X/Sym^+(X)$ has torus boundary so that $Sym^+(X)=Z(X)$.  In the latter case, $Sym^+(X) = G/H$ must also contain a strong involution since $H$ is a subgroup of $Z(\X)$.   Furthermore, because $H \cong \Z/p\Z$ is a non-trivial subgroup of $Z(\X)$ (since $p\geq 2$),  $G$ cannot be generated by a single strong involution.  Thus $G$ must be a dihedral group (of order $\geq 4$), and so $Sym^+(X)=G/H$ is a dihedral group too (though possibly of order $2$).  Since $X/Sym^+(X)$ has $S^2(2,2,2,2)$ boundary, $[Sym^+(X):Z(X)]=2$ as noted in the introduction to this appendix.  Since $Z(X)$ cannot contain a strong involution, is a cyclic group. Consequently, in both cases claims (1), (2), and (3) of the lemma hold.

\medskip

Finally we examine the quotient orbifold $\mathcal{Z}=X/Z(X)$ and its filling $\mathcal{Z}(\bar{\mu})$.
Since $\X$ is the exterior of the knot $\widetilde{K} \subset S^3$, $Sym^+(\X)$ naturally identifies with $Sym^+(S^3, \widetilde{K})$.   In particular, $Z(\X)$ acts freely on $\widetilde{K}$.   Therefore, if $\widetilde{\mu}$ is the lift of the slope $\mu$ to $\X$ and $\bar{\mu}$ is its image in the quotient orbifold $\mathcal{Z}=\X/Z(\X)={X}/Z({X})$,
 then ${\bar{\mu}}$ must be a primitive slope and (4) holds.
 
  Filling $\mathcal{Z}$ along the slope $\bar{\mu}$ produces 
 \[\mathcal{Z}(\bar{\mu}) = X(\mu)/Z(X)\] which we recognize as an orbi-lens 
  space since it is a cyclic quotient of $S^3 = \X(\widetilde{\mu})$ (see \cite[\S 3]{BBCW2012} for context).
  Because $\bar{\mu}$ is a primitive slope, the core of the solid torus in the filling $\mathcal{Z}(\bar{\mu})$  is a knot $\bar{K}$ that is disjoint from the singular set $\Sigma(\mathcal{Z}(\bar{\mu}))$ of the orbi-lens space.   In particular, $\mathcal{Z}$ is the exterior of a knot in an orbi-lens space that is disjoint from the singular set. Thus (5) holds, which completes the proof.
 \end{proof}

In light of the above lemma which employs a classification of cyclic quotients of $S^3$, it will also prove useful to have a list of manifolds and orbifolds that are cyclic quotients of $\P$. 
This is accomplished by Lemma~\ref{lem:dunbar} below which essentially follows from combining the classification of elliptic manifolds with the corresponding classification of elliptic orbifolds in Dunbar \cite{dunbar1988geometric}.

\begin{lemma}\label{lem:dunbar}
If the finite cyclic quotient of $\P$ by symmetries is an orbifold $Q$, then $Q$ is homeomorphic to one of the following:
\begin{enumerate}
\item a manifold $M$ with $\pi_1(M) \cong \pi_1(\P) \times \Z/n\Z$ for some integer $n$ coprime to $30$,
\item an orbifold that fibers over $S^2(2,3,5)$ where the singular set of $Q$ has $1$, $2$, or $3$ components, or
\item an orbifold with base space $S^3$ and singular set a knot or link as pictured in Figure \ref{fig:singSets}.
\end{enumerate}
Moreover, in the case that $Q$ is an orbifold, the components of its singular set are fixed locally by finite cyclic groups of relatively prime orders.
\end{lemma}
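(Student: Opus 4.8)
The plan is to reduce the classification to an exact determination of $\Isom(\P)$ followed by an appeal to Dunbar's census of spherical $3$--orbifolds \cite{dunbar1988geometric}. First I would pin down the isometry group. Writing $\P = S^3/\Gamma$ with $\Gamma\cong\pi_1(\P)$ the binary icosahedral group acting by left quaternion multiplication, the orientation-preserving isometries are $N_{SO(4)}(\Gamma)/\Gamma$. Using $SO(4)=(SU(2)\times SU(2))/\{\pm(1,1)\}$ together with the fact that $\Gamma$ is self-normalizing in $SU(2)$ (its image $I$ is maximal and self-normalizing in $SO(3)$, the outer automorphism of $A_5$ being realized only in $O(3)\smallsetminus SO(3)$), the normalizer is $(\Gamma\times SU(2))/\{\pm 1\}$, so $\Isom^+(\P)\cong SU(2)/\{\pm1\}=SO(3)$, acting by right multiplication. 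Since $\P$ is chiral — an orientation-reversing self-homeomorphism would force $\lambda(\P)=-\lambda(\P)$, contradicting $\lambda(\P)\neq 0$ — there are no orientation-reversing isometries and $\Isom(\P)=SO(3)$.

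Next, any cyclic group of isometries is $C=\Z/n\Z\subset SO(3)$, and all cyclic subgroups of a fixed order are conjugate in $SO(3)$; hence $O_\P=\P/C$ depends only on $n$. Lifting $C$ to $\Z/2n\Z\subset SU(2)$ in the right factor, I would identify $O_\P=S^3/G$, where $G=(\Gamma\times \Z/2n\Z)/\{\pm 1\}$ is the central product amalgamating the two central involutions, a product-type finite subgroup of $SO(4)$ of order $120n$. In particular $O_\P$ is an orientable spherical (elliptic) $3$--orbifold, so it occurs in Dunbar's classification; the task is to read off precisely the entries whose orbifold fundamental group has this form.

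Then I would extract the geometry. The right Hopf fibration $S^3\to S^2$ is preserved by $G$ (the right $\Z/2n\Z$ rotates fibers while the left $\Gamma$ acts on $S^2$ as the icosahedral group $I$), so $O_\P$ Seifert-fibers over $S^2/I=S^2(2,3,5)$; this is case (2). The singular locus consists of exceptional fibers, located by a fixed-point analysis: an element $(\gamma,b^k)\in G$ fixes a point of $S^3$ iff $\gamma$ and $b^k$ are $SU(2)$--conjugate, i.e.\ have equal trace. Comparing $\mathrm{tr}(b^k)=2\cos(k\pi/n)$ with the trace spectrum of $\Gamma$ (whose rotation angles have denominators only in $\{1,2,3,5\}$) shows that nontrivial fixed circles occur exactly over the order-$d$ cone points with $d\mid n$ and $d\in\{2,3,5\}$, the isotropy of such a circle being cyclic of order $d$. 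This gives the trichotomy: if $\gcd(n,30)=1$ the action is free and $O_\P$ is the manifold $M_\P$ with $\pi_1(M_\P)\cong\Gamma\times\Z/n\Z=\pi_1(\P)\times\Z/n\Z$ (case (1), using that $n$ is odd here); otherwise the singular set is a union of $1$, $2$, or $3$ fiber-circles according to how many of $2,3,5$ divide $n$, with cyclic isotropy of orders in $\{2,3,5\}$, which are pairwise coprime — the final ``Moreover'' clause. Matching against Dunbar's tables, those product-type orbifolds whose underlying space is $S^3$ are recorded with their singular knots or links as in Figure~\ref{fig:singSets} (case (3)).

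The main obstacle is the last bookkeeping step: for each admissible $n$, determining the underlying topological space $|O_\P|$ and the isotopy type of the singular link. This is exactly the data Dunbar tabulates, so the real work is to match our product groups $G=(\Gamma\times\Z/2n\Z)/\{\pm1\}$ against his list and to confirm that the Seifert-fibered description and the $S^3$-with-link description together exhaust all possibilities — in particular that no further, non-product quotients can arise, which is precisely what the computation $\Isom(\P)=SO(3)$ rules out.
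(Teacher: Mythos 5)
Your proposal is correct, but it takes a genuinely different and more self-contained route than the paper's proof. The paper's argument is essentially citational: Case (1) is read off from the classification of elliptic manifolds (citing Thurston), Cases (2) and (3) from a careful reading of Dunbar's census of spherical orbifolds, together with one extra observation — Dunbar's list also contains quotients whose singular loci are trivalent graphs, and these are excluded because the isotropy group of a vertex of such a graph is never cyclic. You instead derive the structural statements directly from the quaternionic model: computing $\Isom(\P)\cong SO(3)$ (chirality disposing of orientation-reversing elements, a point the paper never needs to address explicitly), reducing to one conjugacy class of cyclic subgroups per order, and running the trace/fixed-point analysis to obtain the Seifert fibration over $S^2(2,3,5)$, the count of $1$, $2$, or $3$ singular circles according to which of $2,3,5$ divide $n$, and the coprimality of the isotropy orders — so Cases (1), (2) and the ``Moreover'' clause are proved rather than quoted. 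Your approach also makes the paper's trivalent-graph exclusion automatic: since every point stabilizer is shown to be cyclic of order $2$, $3$, or $5$ with a circle as fixed set, the singular locus is a priori a link. Where the two proofs necessarily coincide is the final bookkeeping for Case (3): identifying, for the quotients with underlying space $S^3$, the singular links of Figure~\ref{fig:singSets} — both you and the paper defer to Dunbar's tables here, and you correctly flag this as the one step that is not made self-contained. The trade-off is clear: the paper's proof is shorter and leans on existing classifications; yours is longer but constructive, and it pins down exactly which orbifold arises for each $n$ (in particular, that the number of singular components equals the number of primes among $\{2,3,5\}$ dividing $n$), information the lemma's statement only implies.
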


\begin{proof}
By the Orbifold Theorem (see \cite{boileau2003three} and \cite[Chapter 7]{cooperthree}), we know that that $Q$ is a spherical 3-orbifold.
 
Case 1 follows directly from the classification of elliptic manifolds (see for example \cite[Theorem 4.4.14]{thurston1997three}), while Cases 2 and 3 follow from a careful reading of  \cite{dunbar1988geometric}. In \cite{dunbar1988geometric}, Dunbar also classifies orbifolds covered by elliptic manifolds 
 where the singular sets are trivalent graphs. However, no orbifold of this type can be cyclically covered by a manifold because the group of isometries that fixes a vertex in the trivalent graph is never cyclic.  
\end{proof}

\begin{figure}[ht]
\centering
\includegraphics[width=5 in]{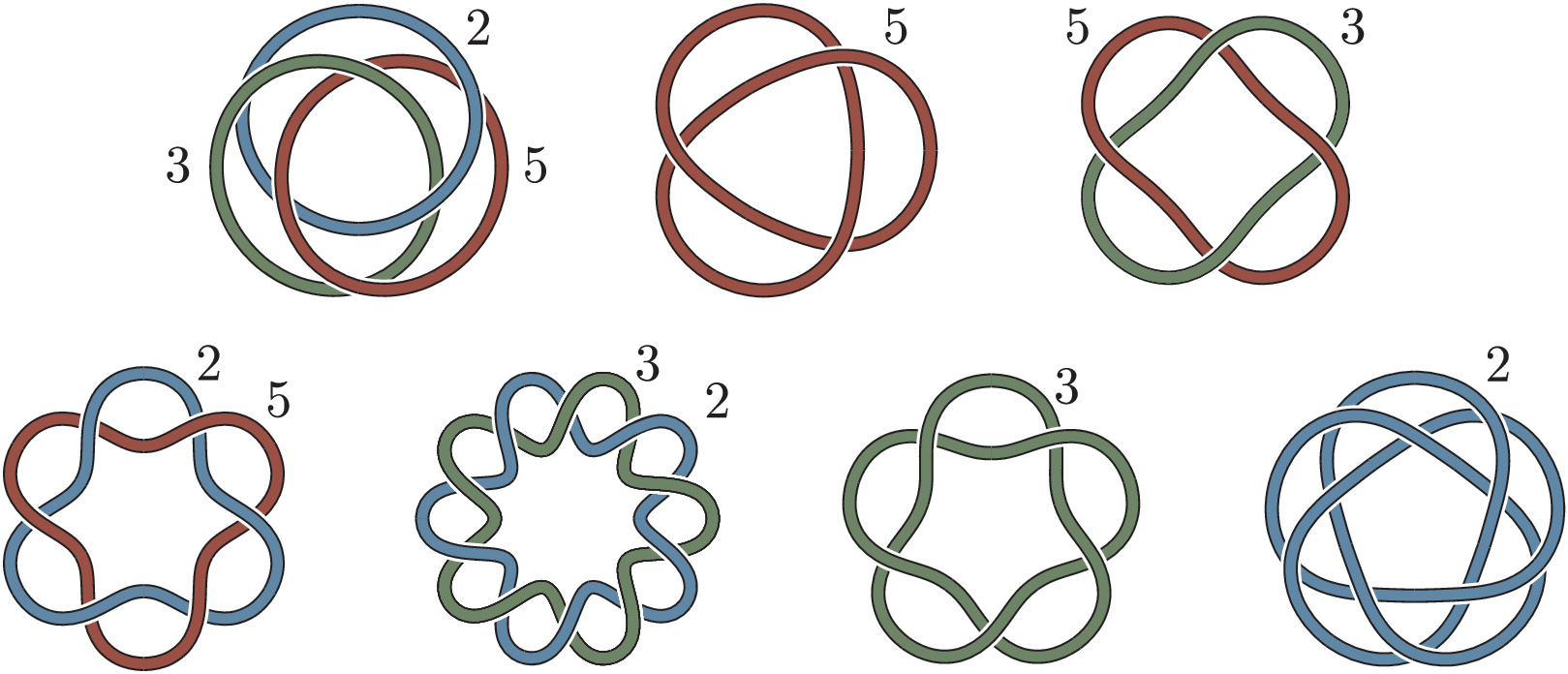}
\caption{ The possible singular sets of orbifolds cyclically covered by $\P$ with underlying space $S^3$. These are the relevant cases from a more general classification in \cite{dunbar1988geometric}. 
Following the notation of that paper, a link component labeled by $n$ indicates a cone angle of $2\pi/n$ along that component in the singular set of the corresponding orbifold.
}
\label{fig:singSets}
\end{figure}

A simple case analysis shows that drilling the singular sets of orbifolds from the previous lemma results in several interesting manifolds. 

\begin{lemma}\label{lem:drilling}
In Cases 2 and 3 of Lemma \ref{lem:dunbar}:
\begin{itemize}
\item If $|\Sigma(Q)|=1$, 
then the complement of  $\Sigma(Q)$ is a Seifert fibered space over the disk with two exceptional fibers.
\item  If $|\Sigma(Q)|=2$,
then the complement of $\Sigma(Q)$ is a Seifert fibered space over the annulus with one exceptional fiber.
\item If $|\Sigma(Q)|=3$, 
then the complement of $\Sigma(Q)$ is $F\times S^1$, where $F$ is a pair of pants. 
\end{itemize} \qed
\end{lemma}

\begin{lemma}\label{lem:noFreeAction}
Let $X$ be the exterior of a hyperbolic knot $K$ in a lens space $L(p,q)$ with $p\geq 2$ admitting a Dehn surgery to $\P$.
Then every non-trivial subgroup of $Z(X)$ acts non-freely on $X$.
\end{lemma}

\begin{proof}

Assume $Z_f \subset Z(X)$ is a non-trivial subgroup of elements that act freely on $X$.  Then the quotient $X/Z_f$ is a hyperbolic manifold with a torus boundary.  Since $Z_f$ is non-trivial, $|Z_f| \geq 2$.
Since $Z(X)$ is cyclic by Lemma~\ref{lem:coverandquotientnew}, the subgroup $Z_f$ must be cyclic.

Assume $\mu$ is the meridian of the knot $K$ in the lens space, and $\gamma$ is the slope of is $\P$ surgery.  
Let $\bar{\mu}$ and $\bar{\gamma}$ be the images of $\mu$ and $\gamma$ in the quotient $X/Z_f$.
Then the orbifold fillings of $X/Z_f$ are quotients of the Dehn fillings of $X$.  In particular,
$(X/Z_f)(\bar{\mu}) = X(\mu)/Z_f$ and $(X/Z_f)(\bar{\gamma}) = X(\gamma)/Z_f$ where the singular sets, if non-empty, are connected, consisting solely of the cores of the filling orbi-solid tori.
Note that these quotients are manifolds exactly when their curves $\bar{\mu}$ and $\bar{\gamma}$ are primitive.

First we observe Lemma~\ref{lem:coverandquotientnew} implies that $\bar{\mu}$ must be a primitive curve because $X/Z_f$ is a quotient of $X$ that covers the orbifold $\mathcal{Z} = X/Z(X)$. Thus we have two cases depending on whether  $\bar{\gamma}$ is a primitive curve.

{\bf Case 1: $\bar{\gamma}$ are primitive.}
  Observe that $\Delta(\bar{\mu},\bar{\gamma}) = |Z_f| \Delta(\mu,\gamma)$ where $\Delta(\bar{\mu},\bar{\gamma})$ is the distance in the boundary of the quotient and $\Delta(\mu,\gamma)$ is measured in $\bdry X$.

  Since $\mu$ and $\gamma$ have primitive images in the quotient manifold $X/Z_f$, $Z_f$ acts freely on the fillings $X(\mu)$ and $X(\gamma)$.  Hence  $(X/Z_f)(\bar{\mu}) = X(\mu)/Z_f$ is a manifold quotient of the lens space $X(\mu)$  and $(X/Z_f)(\bar{\gamma}) = X(\gamma)/Z_f$ is a manifold quotient of $\P=X(\gamma)$.  Thus they are both covered by $S^3$, and therefore they are both finite manifolds (i.e.\ have finite $\pi_1$).   
Since  $X(\gamma)/Z_f$ is covered by $\P$, $|Z_f|$ must be relatively prime to $|\pi_1(\P)| = 120$ by Lemma~\ref{lem:dunbar}(1). Thus we must have $|Z_f|>6$.  Therefore $\Delta(\bar{\mu},\bar{\gamma})>6$ which contradicts the bound on distance between the slopes of two finite surgeries on a hyperbolic knot provided by Boyer and Zhang \cite{boyer1996finite}

{\bf Case 2: $\bar{\gamma}$ is not primitive.}
In this case
$(X/Z_f)(\bar{\gamma})$ is an orbifold that is a cyclic quotient of $\P$ whose singular set is the core of the filling of $X/Z_f$.  However Lemma~\ref{lem:drilling} shows that the exterior of any such core would be the exterior of a torus knot in $S^3$ and hence Seifert fibered, contradicting that $X/Z_f$ must be hyperbolic.
\end{proof}

The final lemma needed to establish the theorem can be proved in a similar manner to the lemma above. 

\begin{lemma}\label{lem:noperiodic}
Let $X$ be the exterior of a hyperbolic knot $K$ in a lens space $L(p,q)$ with $p\geq 2$ admitting a Dehn surgery to $\P$.
Then $|Z(X)|=1$.
\end{lemma}

\begin{proof}
Set $Z=Z(X)$ and assume $|Z| >1$.  
Let $\mu$ be the meridian of $K$ and let $\gamma$ be the slope of the surgery to $\P$.  Then let  $\bar{\mu}$ and $\bar{\gamma}$ be the images of $\mu$ and $\gamma$ in the quotient $\mathcal{Z}=X/Z$.

By Lemma~\ref{lem:noFreeAction}, $\mathcal{Z}$ is an orbifold (with non-empty singular set).
Since $\P$ is a homology sphere, $\mathcal{Z}$ is the exterior of a knot $\bar{K}$ in an orbi-lens space $\mathcal{Z}(\bar{\mu})$ that is disjoint from the singular set  by Lemma~\ref{lem:coverandquotientnew}.  Because $\mathcal{Z}$ is a suborbifold of $\mathcal{Z}(\bar{\mu})$, the singular set  $\Sigma(\mathcal{Z})$ may be identified with the singular set  $\Sigma(\mathcal{Z}(\bar{\mu}))$.  Thus $\Sigma(\mathcal{Z})$ has at most two components.
Since $\P = X(\gamma)$, the quotient orbifold $\P/Z = \mathcal{Z}(\bar{\gamma})$ has a singular set $\Sigma(\mathcal{Z}(\bar{\gamma}))$ consisting of one, two, or three embedded circles according to whether the singular set  $\Sigma(\mathcal{Z})$ has one or two components and whether or not the slope $\bar{\gamma}$ is a primitive curve.

{\bf Case 1: $\bar{\gamma}$ is not primitive.}  
Then $\mathcal{Z}$ is the exterior of one of the components of $\Sigma(\mathcal{Z}(\bar{\gamma}))$.  Hence by Lemmas~\ref{lem:dunbar} and \ref{lem:drilling}, $\mathcal{Z}$ must be Seifert fibered.  Thus $X$ must be Seifert fibered contrary to $K$ being a hyperbolic knot.

{\bf Case 2:  $\bar{\gamma}$ is primitive.} Let $\bar{K}_\gamma$ be the core of the filling solid torus in $\mathcal{Z}(\bar{\gamma})$ so that the exterior of $\bar{K}_\gamma$ is $\mathcal{Z}$.  
Since $\bar{\gamma}$ is primitive, the singular set  $\Sigma=\Sigma(\mathcal{Z}(\bar{\gamma}))$ of $\mathcal{Z}(\bar{\gamma})$ is contained in the knot exterior $\mathcal{Z}$ and thus has either one or two components.  Since $\mathcal{Z}$ is a hyperbolic orbifold in which $\Sigma$ is a geodesic link of one or two components,  the exterior of $\Sigma$ is a hyperbolic manifold $X' = \mathcal{Z}_\Sigma=\mathcal{Z}-\nbhd(\Sigma)$ \cite{kojima1988isometry,sakai1991geodesic}.

Let us write $X'(\alpha,\beta)$ (or $X'(\alpha, \beta_1, \beta_2)$) to denote the Dehn filling of $X'$ along the $\alpha$ and $\beta$ (or $\alpha$ and $\beta_1, \beta_2$) slopes in the components of $\bdry X'$ corresponding to $\bdry \nbhd(\bar{K}_\gamma) = \bdry \mathcal{Z}$ and $\bdry \nbhd(\Sigma)$ respectively.  We will use $-$ to indicate that the boundary component is left unfilled.  

{\bf Case 2a: $\Sigma$ has just one component.}
By Lemma \ref{lem:dunbar}, $X'(\bar{\gamma},-)$ is  a Seifert fibered space over the disk with two exceptional fibers. Thus $X'(\bar{\gamma},-)$ has a one-parameter family of slopes $\sigma_i$ giving lens space fillings $X'(\bar{\gamma},\sigma_i)$. Since $X'(\bar{\mu},-)$ is a solid torus (as it is the complement of the core of an orbi-solid torus in an orbifold lens space with connected singular set),  $X'(\bar{\mu}, \beta_i)$ is also a lens space for each $\beta_i$. Hence, by Thurston's Hyperbolic Dehn Filling Theorem \cite[Theorem 5.8.2]{thurston1979geometry}, we can choose $\beta_i$ so that $X'(-,\beta_i)$ is hyperbolic. However, $\Delta(\bar{\mu}, \bar{\gamma})=|Z| \Delta(\mu,\gamma) \geq 2$, which contradicts the Cyclic Surgery Theorem \cite{CGLS}.

{\bf Case 2b: $\Sigma$ has two components.}
This case is nearly identical to the previous.

By Lemma~\ref{lem:drilling}, $X'(\bar{\gamma},-,-)$ is a Seifert fibered space over the annulus with one exceptional fiber.  Then there are infinitely many pairs of slopes $\{(\sigma_n,\sigma_{n,m}')\}_{(n,m)\in\Z^2}$ such that $X'(\bar{\gamma},\sigma_n,\sigma_{n,m}')$ is a lens space.  Since $X'(\bar{\mu},-,-)$ is a thickened torus (being the complement of the two cores of the orbi-solid tori in the orbi-lens space), $X(\bar{\mu},\sigma_n,\sigma_{n,m}')$ is a lens space for all these pairs of slopes.

Using Thurston's Hyperbolic Dehn Filling Theorem twice, there are infinitely many of these pairs of slopes such that $X(-,\sigma_n,\sigma_{n,m}')$ is hyperbolic.  Again, since $\Delta(\bar{\mu}, \bar{\gamma}) = |Z| \Delta(\mu,\gamma) \geq 2$, we obtain a contradiction to the Cyclic Surgery Theorem \cite{CGLS} for these pairs.
\end{proof}

It may be tempting to try to use $|Z|\cdot \Delta(\mu,\gamma)$ in the previous lemma place try and improve the bound in \cite{boyer1996finite} that $\Delta(\mu,\gamma)\leq 2$. However, this bound is known to be sharp. Also, when $Z(M)$ is trivial, we lose the ability to ``match up'' the cores of the solid tori coming from the Heegaard splitting of the lens space with the exceptional fibers in $\P$, and so the arguments in this appendix will not apply.

\begin{proof}[Proof of Theorem~\ref{thm:symm}]
By Proposition~\ref{prop:noReverse}, $Sym(X)=Sym^+(X)$.  
From Lemma~\ref{lem:coverandquotientnew}(2), $Z(X)$ has index at most $2$ in $Sym(X)$.   Then since $Z(X)=1$ by Lemma~\ref{lem:noperiodic}, we see that $Sym(X)$ is either trivial or $\Z/2\Z$.   
According to Lemma~\ref{lem:coverandquotientnew}(3),
the non-trivial element in this latter case must be a strong involution. 
\end{proof}

\bigskip
\noindent{\bf Acknowledgments.}
The figure in this appendix is courtesy of Ken Baker.

 N.H.\ was supported by Australian Research Council Discovery Grant DP130103694 and by a grant from the Simons Foundation (\#524123 to Neil R. Hoffman). This appendix benefited from the thoughtful suggestions and corrections of the referee. 


\medskip
\myrule{open diamond}{open diamond}
\medskip

\bibliographystyle{plain}
\bibliography{poincareTN2}
\end{document}